\newcommand{\X}{\Lambda}
\newcommand{\Y}{M}
\newcommand{\Z}{N}
\newcommand{\x}{\lambda}
\newcommand{\y}{\mu}
\newcommand{\z}{\nu}
\newcommand{\expleend}{\hfill\vbox{\hrule height0.6pt\hbox{%
\vrule height1.3ex width0.6pt\hskip0.8ex%
\vrule width0.6pt}\hrule height0.6pt%
}}
\newcommand{\bH}{\mathbb{H}}
\newcommand{\bG}{\mathbb{G}}
\newcommand{\ints}{\mathbb Z}
\DeclareMathOperator{\ba}{\backslash}
\DeclareMathOperator{\res}{|}
\DeclareMathOperator{\con}{/}
\newcommand\sbullet{\mathbin{\vcenter{\hbox{\scalebox{0.4}{$\bullet$}}}}}
\newcommand{\dott}[1]{\accentset{\sbullet}{#1}}
\newcommand{\hatt}[1]{\widehat{#1}}
\newcommand{\barr}[1]{\overline{#1}}
\newcommand{\vect}[1]{\mathbf{#1}}
\DeclareMathOperator{\barrast}{\overline{\ast}}
\newcommand{\sk}[1]{#1}
\newcommand{\e}[1]{\theta({#1})}
\newcommand{\ez}[2]{\theta_{{#1}}({#2})}
\newtheorem{theorem}{Theorem}%
\newtheorem{proposition}[theorem]{Proposition}%
\newtheorem{corollary}[theorem]{Corollary}
\theoremstyle{definition}%
\newtheorem{example}[theorem]{Example}%
\newtheorem{remark}[theorem]{Remark}%
\newtheorem{notation}[theorem]{Notation}
\newtheorem{definition}[theorem]{Definition}%
\begin{document}

\title[Tensor products of multimatroids]{Tensor products of multimatroids and a Brylawski-type formula for the transition polynomial}

\author[1]{\fnm{Iain} \sur{Moffatt}}\email{iain.moffatt@rhul.ac.uk}

\author[2]{\fnm{Steven} \sur{Noble}}\email{s.d.noble@leeds.ac.uk}

\author[1]{\fnm{Maya} \sur{Thompson}}\email{mayathompson.math@gmail.com}

\affil[1]{\orgdiv{Department of Mathematics}, \orgname{Royal Holloway, University of London}, \orgaddress{ \city{Egham}, \postcode{TW20~0EX}, \country{United Kingdom}}}

\affil[2]{\orgdiv{School of Computer Science}, \orgname{University of Leeds}, \orgaddress{\street{Woodhouse}, \city{Leeds}, \postcode{LS2~9JT}, \country{United Kingdom}}}

\abstract{ 
Brylawski's tensor product formula expresses the Tutte polynomial of the tensor product of two graphs or matroids in terms of Tutte polynomials arising from the tensor factors.
Analogous tensor product formulas are known for the Bollob\'as--Riordan and transition polynomials of graphs embedded in surfaces. 
We show that these formulas are instances of a more general result for multimatroids by
giving a tensor product formula for the multimatroid transition polynomial and showing that Brylawski's formula and its topological analogues arise from it. 
Along the way we provide formulas for the transition polynomial of the two-sum and star product of multimatroids.
}

\keywords{delta-matroid, graph polynomials, multimatroid, ribbon graph, tensor product, transition polynomial, two-sum}

\pacs[MSC2020]{05C31, 05B35, 05C10}

\maketitle

\bigskip

\section{Introduction}
Let $G$ and $H$ be graphs, $G$ loopless and bridgeless, and 
$H$ with a distinguished edge $e$ that is neither a loop nor a bridge. The tensor product $G\otimes_e H$ is formed by, for each edge of $G$, identifying that edge with $e$ in a copy of $H$ then deleting the  edge formed by the identification. So each edge of $G$ is replaced with a copy of $H \ba e$. \emph{Brylawski's tensor product formula}~\cite{Brylawski_2010}  expresses the Tutte polynomial of $G\otimes_e H$ in terms of Tutte polynomials coming from $G$ and $H$:
\[
T(G \otimes_e H;x,y) = \alpha^{n(G)} \beta^{r(G)}
T\Big(G;\tfrac{T(H\ba e ;x,y)}{\beta},\tfrac{T(H\con e;x,y)}{\alpha} \Big),
\] 
where $\alpha$ and $\beta$ are  polynomials in $\mathbb{Z}[x,y]$ uniquely defined by the system of linear equations
\begin{align*}
(x-1)\alpha + \beta &= T(H\ba e;x,y), \\
\alpha + (y-1)\beta &= T(H\con e;x,y).
\end{align*}
Brylawski's formula has been extended to graphs embedded in surfaces in two different ways: in~\cite{Huggett_2011} where a tensor product formula for the Bollob\'as--Riordan polynomial was given, and in~\cite{ELLIS_MONAGHAN_2014} which gave a tensor product formula for the topological transition polynomial.

In the classical case, Brylawski proved the  tensor product formula for the Tutte polynomial 
in the more general setting of matroids  rather than just graphs (as is stated in  Corollary~\ref{cor:brymat} below).
 All of the structure required for the result for graphs resides in the cycle matroids of the graphs. 
Here we address the problem of finding the correct framework for the tensor product formulas for graphs embedded in surfaces. 
A reader familiar with topological analogues of the Tutte polynomial might expect this framework to be given by delta-matroids. Delta-matroids~\cite{MR904585} form a generalisation of matroids where bases may have different cardinalities, see~\cite{CMNR1,CMNR2} for further background. However, we shall see that the natural setting is in fact Bouchet's tight multimatroids~\cite{MM1zbMATH01116184, MR1845490}, which form a generalisation of delta-matroids where one may have an arbitrary (but finite) number of minor edge operations, instead of only deletion and contraction.

In this paper we introduce tensor products of tight multimatroids. This operation is compatible with the graph, matroid and embedded graph tensor products. Our main result is a generalisation  of Brylawski's formula to the transition polynomial of multimatroids, given here as Theorem~\ref{thm:main}.   From this we deduce  tensor product formulas for delta-matroids, including the recovery of Brylawski's result for matroids. We also deduce an extension of Ellis-Monaghan and Moffatt's tensor product formula~\cite{ELLIS_MONAGHAN_2014} for the topological transition polynomial by relaxing the restriction on the type of edges that the tensor product acts on.

\medskip

This paper is structured as follows. Section~\ref{sec:multim} provides an overview of multimatroids. Section~\ref{sec:tensors} discusses the two-sum operation for multimatroids and introduces the tensor product. In Section~\ref{sec:isotropic}, we describe related constructions including Bouchet's isotropic systems~\cite{MR919874}. Section~\ref{sec:results} contains our main result, Theorem~\ref{thm:main}, giving a version of Brylawski's formula for the multimatroid transition polynomial. 
Along the way, we give a formula for the multimatroid transition polynomial of the two-sum of multimatroids in Theorem~\ref{thm:trans2sum} and of the star product of multimatroids in Theorem~\ref{thm:star}.
 In Section~\ref{sec:delta} we apply our main result to deduce tensor product formulas for delta-matroid and matroid polynomials. Finally, in Section~\ref{sec:rg} we review a connection between graphs embedded in surfaces and tight 3-matroids and deduce a tensor product formula for the topological transition polynomial. To aid the digestion of our results, we provide plenty of  examples and we do not assume any familiarity with multimatroids.

\section{Multimatroids}\label{sec:multim}
Multimatroids were introduced by Bouchet in~\cite{MM1zbMATH01116184} as a simultaneous generalization of delta-matroids and isotropic systems which he had earlier introduced in~\cite{MR904585} and~\cite{MR919874}, respectively. They are defined on what he called a \emph{carrier}, a pair
$(U,\Omega)$ where $U$ is a finite set, called the \emph{ground set}, and $\Omega$ is a partition of $U$ into (non-empty) blocks called \emph{skew classes}. If each skew class
has size $q$, then $(U,\Omega)$ is a $q$-carrier. 

It is helpful to think of the skew classes, rather than the elements of $U$, as playing a role analogous to the elements of a matroid, and the elements of each skew class representing different `states' that the skew class might play. As an illustration, in Section~\ref{sec:rg} we consider multimatroids arising from ribbon graphs (or, equivalently, graphs in surfaces). There each edge of the ribbon graph corresponds to a skew class, 
and each skew class consists of three elements that can be thought of corresponding to an edge being either present, absent, or present but given a ``half-twist''.
We shall adopt notation that reflects this understanding.

\begin{notation}\label{not:1}
For each skew class $\sk{e}$, the members of the skew class are named by decorating the symbol $\sk{e}$. For example, for a skew class $\sk{e}$ we might write $\sk{e}=\{e_1, e_2,\ldots ,e_q\}$ or $\sk{e}=\{\dott{e}, \barr{e}, \hatt{e}\}$. (See Example~\ref{example1} for this convention in practice.) 
\expleend\end{notation}

For a carrier $(U,\Omega)$, a pair of distinct elements belonging to the same skew class is called a \emph{skew pair}. A \emph{subtransversal} of $\Omega$ is a subset of $U$
meeting each skew class at most once. 
The set of
subtransversals of $\Omega$ is denoted by $\mathcal S(\Omega)$. 
A \emph{transversal} $T$ of $\Omega$ is a subtransversal of $\Omega$ with $|T|=|\Omega|$, that is, a set containing precisely one element from each skew class. The set of
transversals of $\Omega$ is denoted by $\mathcal T(\Omega)$. 
We say that a subtransversal $S$ is a \emph{near-transversal} if $|S|=|\Omega|-1$, that is, it meets all but one skew class. 
The set of
near-transversals of $\Omega$ is denoted by $\mathcal N(\Omega)$. 

\medskip

A \emph{multimatroid} is a triple $Z=(U,\Omega,r_Z)$ so that $(U,\Omega)$ is a carrier and $r_Z$ is a \emph{rank function} $r_Z:\mathcal
S(\Omega)\rightarrow \ints_{\geq 0}$ satisfying the two axioms below. 
\begin{enumerate}[label = {(R\arabic*)}]
\item \label{r1} For each transversal $T$ of $\Omega$, the pair comprising $T$ and the restriction of $r_Z$ to subsets of $T$ forms a matroid.
\item \label{r2} For each subtransversal $S$ of $\Omega$ and skew pair $\{e_i,e_j\}$ from  a skew class disjoint from $S$, we have
    \[ r_Z(S\cup \{e_i\}) + r_Z(S\cup \{e_j\}) - 2r_Z(S) \geq 1.\]
\end{enumerate}
We generally omit the subscript $Z$ from $r_Z$ whenever the context is clear.  Axiom~(R1) implies the following: $r(\emptyset)=0$; for $A\in\mathcal S(\Omega)$ and  $e_i\in U$ such that $A\cup\{e_i\}\in\mathcal S(\Omega)$, $r(A)\leq r(A\cup\{e_i\})\leq r(A)+1$; and for $A$, $B\in\mathcal S(\Omega)$ such that $A\cup B\in\mathcal S(\Omega)$, $r(A\cup B)+r(A\cap B)\leq r(A)+r(B)$.

 It is convenient to extend the concepts we have defined for
carriers to multimatroids. For example, 
by a transversal of a multimatroid we mean a transversal of its underlying carrier, and we use $\mathcal{T}(Z)$ to denote the set of transversals of a multimatroid $Z$, and $\mathcal{N}(Z)$ the set of near-transversals. Furthermore, the ground set of a multimatroid is the ground set of its carrier. We sometimes use $U(Z)$ to denote the ground set of a multimatroid $Z$.
We say that a multimatroid is a \emph{$q$-matroid} if its carrier is a $q$-carrier (i.e., each skew class has exactly $q$ members). (The special case of $2$-matroids was introduced earlier than general multimatroids in~\cite{MR904585} under the name
\emph{symmetric matroids}.) 

An element $e_i$ of a multimatroid $(U,\Omega,r)$ is \emph{singular} if $r(\{e_i\})=0$. A skew class is \emph{singular} if it contains a singular element.
A multimatroid is \emph{non-degenerate} if every skew class has size at least two.   
If $S$ is a subtransversal of $Z$, then we define its \emph{nullity} $n_Z(S)$ by $n_Z(S)=|S|-r_Z(S)$, again omitting
the subscript when the context is clear.

Let $Z=(U,\Omega,r)$, let $e_i$ be in $U$, and suppose that $e_i$ is in the skew class $\sk{e}$. Then, following~\cite{MM2zbMATH01119073}, the \emph{elementary minor} of $Z$ by $e_i$, denoted by $Z \res e_i$, is the triple
$(U',\Omega',r')$ with $U'=U-e$,  $\Omega'= \Omega- \{\sk{e}\}$
and for all $S$ in $\mathcal S(\Omega')$, $r'(S)=r(S\cup\{e_i\})-r(\{e_i\})$.
It is straightforward to show that $Z \res e_i$ is a multimatroid.

An \emph{independent set} of $Z$ is a subtransversal $S$ with $n(S)=0$. Subtransversals that are not independent are \emph{dependent}. A \emph{basis} is a maximal independent subtransversal, and we let $\mathcal{B}(Z)$ denote the set of all bases of $Z$. 
 For any subtransversal $S$, we have $r(S)=\max_{B\in\mathcal B(Z)} |B\cap S|$, and consequently multimatroids are uniquely determined by their carrier and collection of bases. By \cite[Proposition~5.5]{MM1zbMATH01116184}, the bases of a non-degenerate multimatroid $Z$ are transversals of $Z$. 

\medskip

We are particularly interested in tight multimatroids. These were introduced by Bouchet in~\cite{MR1845490} and they include the 3-matroids of ribbon graphs (as discussed in Section~\ref{sec:rg}). 

\begin{definition}\label{def:tight}
A multimatroid $Z$ is \emph{tight} if it is non-degenerate and for every near-transversal $S$, there is a unique element, which we shall denote by $\ez{Z}{S}$,  in the skew
class disjoint from $S$ such that $r(S \cup \{\ez{Z}{S}\}) = r(S)$.
We omit the subscript $Z$, when the context is clear.
\end{definition}
Note that the uniqueness condition is not included in Bouchet's  definition of tight multimatroids, however uniqueness follows easily from~\ref{r2}. 
 Moreover, as shown in \cite[Proposition~4.1]{MR1845490}, if a multimatroid is tight, then so are all its elementary minors. 

An isomorphism between two multimatroids is a bijection respecting the partition into skew classes and preserving rank. If $Z_1$ and $Z_2$ are isomorphic, we write $Z_1\cong Z_2$. Notice that tightness is preserved under isomorphism and that every isomorphism maps singular elements to singular elements.

\begin{example}\label{example1}
 Let $(U,\{a,b,c\})$ be the 3-carrier with skew classes
 $\sk{a}=\{\dott a, \barr{a}, \hatt{a}\}$,
 $\sk{b}=\{\dott b, \barr{b}, \hatt{b}\}$, and
 $\sk{c}=\{\dott c, \barr{c}, \hatt{c}\}$. 
 We implicitly assume in this and all other examples that differently named elements are distinct.
 
 An example of a 3-matroid  $Z$ over this carrier has the following sixteen bases:
\begin{center}
\begin{tabular}{llll}
$\{ \dott {a}, \barr{b}, \barr{c}\}$,&
$\{ \hatt{a}, \dott {b}, \barr{c}\}$,&
$\{ \dott {a}, \hatt{b}, \dott {c}\}$,&
$\{ \hatt{a}, \hatt{b}, \dott {c}\}$,\\
$\{ \barr{a}, \dott {b}, \barr{c}\}$,&
$\{ \hatt{a}, \dott {b}, \dott {c}\}$,&
$\{ \dott {a}, \dott {b}, \hatt{c}\}$,&
$\{ \hatt{a}, \hatt{b}, \hatt{c}\}$,\\
$\{ \dott {a}, \dott {b}, \dott {c}\}$,&
$\{ \dott {a}, \hatt{b}, \barr{c}\}$,&
$\{ \dott {a}, \barr{b}, \hatt{c}\}$,&
$\{ \hatt{a}, \barr{b}, \hatt{c}\}$,\\
$\{ \hatt{a}, \barr{b}, \barr{c}\}$,&
$\{ \barr{a}, \hatt{b}, \barr{c}\}$,&
$\{ \barr{a}, \dott {b}, \hatt{c}\}$,&
$\{ \barr{a}, \hatt{b}, \hatt{c}\}$.
\end{tabular}
\end{center}

Of the remaining eleven transversals, all of which are dependent,
$\{ \barr{a}, \barr{b}, \dott {c}\}$
 has rank one and the rest have rank two.
This multimatroid can be shown to be tight, and, for example, $\e{\{\dott {a}, \barr{b}\}}=\dott{c}$.

Note that in our examples, the naming convention we have adopted for the elements of a skew class gives a partition of the ground set of the multimatroid into transversals determined by the type of decoration. We stress that, in general, the elements of a multimatroid do not have a natural partition into transversals. Nevertheless, as we shall see in Sections~\ref{sec:delta} and~\ref{sec:rg}, the elements of a multimatroid coming from a delta-matroid or from a ribbon graph may be naturally partitioned into transversals, and this motivates our choice of notation.\expleend
\end{example}

For use later, we give a second, similar example.
\begin{example}\label{example1b}
 Let $(U,\{a,f,g\})$ be the 3-carrier with skew classes
 $\sk{a}=\{\dott a, \barr{a}, \hatt{a}\}$, 
 $\sk{f}=\{\dott f, \barr{f}, \hatt{f}\}$, and
 $\sk{g}=\{\dott g, \barr{g}, \hatt{g}\}$. An example of a 3-matroid  $Z$ over this carrier has the following sixteen bases:
\begin{center}
\begin{tabular}{llll}
$\{ \dott{a}, \dott{f}, \dott{g}\}$,&
$\{ \dott{a}, \dott{f}, \hatt{g}\}$,&
$\{ \dott{a}, \barr{f}, \barr{g}\}$,&
$\{ \dott{a}, \barr{f}, \hatt{g}\}$,
\\
$\{ \dott{a}, \hatt{f}, \dott{g}\}$,&
$\{ \dott{a}, \hatt{f}, \barr{g}\}$,&
$\{ \barr{a}, \dott{f}, \barr{g}\}$,&
$\{ \barr{a}, \dott{f}, \hatt{g}\}$,
\\
$\{ \barr{a}, \barr{f}, \dott{g}\}$,&
$\{ \barr{a}, \barr{f}, \hatt{g}\}$,&
$\{ \barr{a}, \hatt{f}, \dott{g}\}$,&
$\{ \barr{a}, \hatt{f}, \barr{g}\}$,
\\
$\{ \hatt{a}, \dott{f}, \dott{g}\}$,&
$\{ \hatt{a}, \dott{f}, \barr{g}\}$,&
$\{ \hatt{a}, \barr{f}, \dott{g}\}$,&
$\{ \hatt{a}, \barr{f}, \barr{g}\}$.
\end{tabular}
\end{center}
\expleend
\end{example}

\section{Tensor products and two-sums}\label{sec:tensors}

The two-sum of multimatroids was introduced in~\cite{2sumpaper}. 
The idea behind it is as follows. 
We say that multimatroids $Z_1$
and $Z_2$ \emph{align at a non-singular skew class $\sk{e}$} if the intersection of their ground sets is $\sk{e}$, and $\sk{e}$ is a non-singular skew class of both $Z_1$ and $Z_2$.
Suppose  $Z_1$ and $Z_2$ are tight multimatroids that align at a non-singular skew class $\sk{e}$. Suppose also that $I_1$ and $I_2$ are independent near-transversals of $Z_1$ and $Z_2$, respectively, both avoiding $\sk{e}$. Then $I_1 \cup I_2$ is a basis of the two-sum $Z_1\oplus_{2} Z_2$ along $\sk{e}$ if and only if $\ez{Z_1}{I_1} \neq \ez{Z_2}{I_2}$. The idea of constructing a tensor product of $Z$ and $Z'$ is then to two-sum a copy of $Z'$ along each skew class of $Z$.

Formally the two-sum is defined as follows.
\begin{definition}\label{def:2sum}
Let $Z_1:=(U_1, \Omega_1, r_1)$ and $Z_2:=(U_2, \Omega_2, r_2)$ be tight multimatroids that align at a non-singular skew class $\sk{e}$. 
Then the multimatroid $Z= (U, \Omega, r)$ is the \emph{two-sum} along $\sk{e}$, denoted by $Z_1 \oplus_{2} Z_2$, if $U=(U_1\cup U_2)-\sk{e}$, $\Omega = (\Omega_1\cup \Omega_2)-\{\sk{e}\}$ 
and a transversal $B$ of $\Omega$ is a basis if and only if $B\cap U_1$ and $B\cap U_2$ are independent near-transversals of $Z_1$ and $Z_2$, respectively, with 
$\ez{Z_1}{B\cap U_1} \ne \ez{Z_2}{B\cap U_2}$.
\end{definition}
It is straightforward to show that $Z$ is genuinely a multimatroid~\cite{2sumpaper}.

\begin{example}
Suppose that $Z_1$ is the multimatroid in Example~\ref{example1}, and $Z_2$ that in Example~\ref{example1b}. We consider $Z_1 \oplus_2 Z_2$.
The set of independent near-transversals $T_1$ of $Z_1$ with 
$\ez{Z_1}{T_1}=\dott{a}$ is
$X_{\dott{a}} :=
\{
\{\dott{b},\barr{c}\}, 
\{\hatt{b}, \hatt{c}\}\}$;
where
$\ez{Z_1}{T_1}=\barr{a}$ is
$X_{\barr{a}} :=
\{
\{\dott{b},\dott{c}\}, 
\{\barr{b},\barr{c}\}, 
\{\barr{b},\hatt{c}\}
\{\hatt{b},\dott{c}\}\}$; and where
$\ez{Z_1}{T_1}=\hatt{a}$ is
$X_{\hatt{a}} :=
\{
\{\dott{b},\hatt{c}\}, 
\{\hatt{b},\barr{c}\}\}$.
The set of independent near-transversals $T_2$ of $Z_2$ with 
$\ez{Z_2}{T_2}=\dott{a}$ is
$Y_{\dott{a}} :=
\{
\{\dott{f},\barr{g}\}, 
\{\barr{f},\dott{g}\}\}$;
where $\ez{Z_2}{T_2}=\barr{a}$ is 
$Y_{\barr{a}} :=
\{
\{\dott{f},\dott{g}\}, 
\{\barr{f},\barr{g}\}\}$;
and where $\ez{Z_2}{T_2}=\hatt{a}$ is
$Y_{\hatt{a}} :=
\{
\{\dott{f},\hatt{g}\}, 
\{\barr{f},\hatt{g}\},
\{\hatt{f},\dott{g}\}, 
\{\hatt{f},\barr{g}\}\}$.
Then 
$Z_1 \oplus_2 Z_2$ has skew classes
$\sk{b}:=\{\dott b, \barr{b},\hatt{b}\}$,
$\sk{c}:=\{\dott c, \barr{c}, \hatt{c}\}$,
$\sk{f}:=\{\dott f, \barr{f}, \hatt{f}\}$, and
$\sk{g}:=\{\dott g, \barr{g}, \hatt{g}\}$.
 Its bases consist of all sets of the form $x\cup y$ where 
$x\in X_{\dott{a}}$ and $y\in Y_{\barr{a}} \cup Y_{\hatt{a}}$, 
or
$x\in X_{\barr{a}}$ and $y\in Y_{\dott{a}} \cup Y_{\hatt{a}}$,
or 
$x\in X_{\hatt{a}}$ and $y\in Y_{\dott{a}} \cup Y_{\barr{a}}$.\expleend
\end{example}

\medskip
The next two results are from~\cite{2sumpaper}. The proofs are straightforward.
\begin{proposition}\label{prop:twosumrank}
Let $Z_1:=(U_1, \Omega_1, r_1)$ and $Z_2:=(U_2, \Omega_2, r_2)$ be tight multimatroids that align at a non-singular skew class $\sk{e}$. 
Suppose that
$T_1$ is a near-transversal of $Z_1$ avoiding $\sk{e}$, and $T_2$ is a near-transversal of $Z_2$ avoiding $\sk{e}$. Then
\[
r_{Z_1 \oplus_2 Z_2}(T_1\cup T_2) 
=
\begin{cases} r_{Z_1}(T_1) +r_{Z_2}(T_2) 
&\text{if } 
\ez{Z_1}{T_1}\ne \ez{Z_2}{T_2}, \\
r_{Z_1}(T_1) +r_{Z_2}(T_2) -1
&\text{if } \ez{Z_1}{T_1}=\ez{Z_2}{T_2}. 
\end{cases}
\]
\end{proposition}

\begin{proposition}\label{lem:slack}
Let $Z_1:=(U_1, \Omega_1, r_1)$ and $Z_2:=(U_2, \Omega_2, r_2)$ be tight multimatroids that align at a non-singular skew class.  Then their two-sum  $Z_1 \oplus_{2} Z_2$ is also tight.
\end{proposition}

The two-sum operation is clearly commutative. The previous proposition suggests the possibility of forming the two-sum of two multimatroids and then forming a two-sum of the resulting multimatroid with a third multimatroid. The following associativity property also holds~\cite{2sumpaper}. Again its proof is straightforward.

\begin{proposition}\label{prop:sums}
Let $Z_1, Z_2, Z_3$ be tight multimatroids with the carrier of $Z_i$ being $(U_i,\Omega_i)$ for $i=1,2,3$. Suppose that $e$ and $f$ are distinct non-singular skew classes of $Z_2$, that $e$ is a non-singular skew class of $Z_1$, that $f$ is a non-singular skew class of $Z_3$, and that $U_1 \cap U_2 = e$, $U_2\cap U_3=f$ and  $U_1 \cap U_3 =\emptyset$. 
Then 
\[ \big( Z_1 \oplus_{2} Z_2\big)  \oplus_{2}   Z_3 
=
Z_1 \oplus_{2} \big( Z_2 \oplus_2 Z_3\big). 
\]
\end{proposition}

Proposition~\ref{prop:sums} allows us to carry out multiple two-sum operations in any order.
More precisely, we say a finite collection $\mathcal Z$ of tight multimatroids is \emph{graphable} if it 
satisfies both the following conditions.
\begin{itemize}
\item No element belongs to the ground set of three or more members of $\mathcal Z$.
\item For every two distinct members $Z_1$ and $Z_2$ of $\mathcal Z$, 
either the ground sets of $Z_1$ and $Z_2$ are disjoint or $Z_1$ and $Z_2$ align at a non-singular skew class.
\end{itemize}
If $\mathcal Z$ is graphable then we may form its \emph{graph} as follows. 
Its vertices are the members of $\mathcal Z$ and 
distinct vertices $Z_1$ and $Z_2$ are joined by an edge if the ground sets of $Z_1$ and $Z_2$ intersect.
 Whenever we consider the graph of a collection of multimatroids, we assume implicitly that the collection is graphable.
 Suppose that $G$, the graph of $\mathcal Z$, is a tree. Then Proposition~\ref{prop:sums} ensures that we can carry out all the two-sums corresponding to the edges of $G$ in any order without affecting the result. We denote the resulting multimatroid by $\bigoplus_2 \mathcal Z$.

We focus on a special case. Suppose that $Z_1$ is a tight multimatroid with carrier $(U_1,\Omega_1)$ in which every skew class is non-singular, and that $\mathcal Z=\{Z_e:e \in \Omega_1\}$ is a collection of tight multimatroids indexed by the skew classes of $Z_1$ 
such that, for each $e$, $Z_1$ and $Z_e$ align at the non-singular skew class $e$. 
Suppose that the graph of $\mathcal Z \cup\{Z_1\}$ is a star with central vertex $Z_1$. Then we say that $\bigoplus_2 (\mathcal Z \cup \{Z_1\})$ is the \emph{star product of} $Z_1$ and $\mathcal Z$. 

We will be particularly interested in the following case of the star product. Suppose that $Z_1$ and $\mathcal Z=\{Z_e:e \in \Omega_1\}$ are as described above. Now suppose that $Z_2$ is a tight multimatroid with a non-singular skew class $a$ so that for each $e \in \Omega_1$, the multimatroid $Z_e$ is isomorphic to $Z_2$ via an isomorphism $\phi_e$ with $\phi_e(e)=a$.
Finally, let $\Phi = \{\phi_e: e\in \Omega_1\}$. 
Then we say that the star product of $Z_1$ and $\mathcal Z$ is the \emph{tensor product} of $Z_1$ and $Z_2$ via $\Phi$, which we denote by $Z_1 \otimes_{\Phi} Z_2$.
 
\begin{remark}
The carrier of $Z_1 \otimes_{\Phi} Z_2$ is determined by $\Phi$. Moreover $Z_1 \otimes_{\Phi} Z_2$ is determined up to isomorphism by $Z_1$, $Z_2$ and the collection of maps $\{\phi_e\res e : e\in \Omega_1\}$, where $\phi_e\res e$ denotes the restriction of $\phi_e$ to the skew class $e$. 

But, as we shall see in the following example, two tensor products of $Z_1$ and $Z_2$ are not necessarily isomorphic, which is why we incorporate $\Phi$ into the notation.  
This ambiguity arises because the elements of a skew class of a multimatroid are generally neither ordered nor otherwise distinguished by, for example, a partition of the ground set of a multimatroid into transversals.
So, there is no canonical choice for a bijection between a skew class $e$ of $Z_1$ and the distinguished skew class $a$ of $Z_2$.

As we shall see in Theorems~\ref{thm:rgpdm} and~\ref{thm:rgtpf}, when we apply our results to the multimatroids coming from delta-matroids (including matroids) and ribbon graphs, respectively, there is a natural way to distinguish elements of each skew class and there are canonical bijections between each skew class of $Z_1$ and the skew class $a$ of $Z_2$.
\expleend\end{remark}

\medskip

Note that by repeated application of Proposition~\ref{lem:slack}, a tensor product of tight multimatroids is tight. We now give a small example of a tensor product of two 2-matroids.

\begin{example}\label{ex:smtens}
    Let $Z_1$ be the tight 2-matroid whose skew classes are $e_1:=\{\dott{e}_1,\barr{e}_1\}$ and $e_2 := \{\dott{e}_2,\barr{e}_2\}$, and whose bases are $\{\dott{e}_1,\barr{e}_{2}\}$ and $\{\barr{e}_1,\dott{e}_{2}\}$. 
    For $i=1,2$, let $Z_{e_i}$ be the tight \mbox{$2$-matroid} whose skew classes are $e_i:=\{\dott{e}_i,\barr{e}_i\}$, $b_i:=\{\dott{b}_i,\barr{b}_i\}$, and $c_i:=\{\dott{c}_i,\barr{c}_i\}$, and whose bases are $\{\dott{e}_i,\barr{b}_{i},\barr{c}_{i}\}$, $\{\barr{e}_i,\dott{b}_{i},\barr{c}_{i}\}$, and $\{\barr{e}_i,\barr{b}_{i},\dott{c}_{i}\}$. Now let $Z_2$ be the tight \mbox{$2$-matroid} whose skew classes are $a:=\{\dott{a}, \barr{a}\}$, $b:=\{\dott{b},\barr{b}\}$, and $c:=\{\dott{c},\barr{c}\}$
    and whose bases are
    $\{\dott{a},\barr{b},\barr{c}\}$, $\{\barr{a},\dott{b},\barr{c}\}$, and $\{\barr{a},\barr{b},\dott{c}\}$.  For $i=1,2$, let $\phi_{e_i}:U(Z_{e_i})\rightarrow U(Z_2)$ be given by 
    \[ \phi_{e_i}(\dott{e}_i):=\dott a,\
    \phi_{e_i}(\barr{e}_i):=\barr a,\
    \phi_{e_i}(\dott{b}_i):=\dott b,\
    \phi_{e_i}(\barr{b}_i):=\barr b,\
    \phi_{e_i}(\dott{c}_i):=\dott c,\
    \phi_{e_i}(\barr{c}_i):=\barr c.
    \] Then for $i=1,2$, $\phi_{e_i}$ is an isomorphism. Let $\Phi:=\{\phi_{e_1},\phi_{e_2}\}$.

    The tensor product $Z_1\otimes_{\Phi} Z_2$ 
    is the $2$-matroid with skew classes 
    $\{\dott{b}_1,\barr{b}_1\}$,
    $\{\dott{c}_1,\barr{c}_1\}$,
    $\{\dott{b}_2,\barr{b}_2\}$,
    $\{\dott{c}_2,\barr{c}_2\}$, 
and bases
 $\{\barr b_{1},\barr c_{1},\dott b_{2},\barr 
 c_{2}\}$,
    $\{\barr b_{1},\barr c_{1},\barr b_{2},\dott c_{2}\}$,
    $\{\dott b_{1},\barr c_{1},\barr b_{2},\barr c_{2}\}$, and
    $\{\barr b_{1},\dott c_{1},\barr b_{2},\barr c_{2}\}$.

Suppose we interchange the roles of $\dott e_2$ and $\barr e_2$ in $Z_{e_2}$, so that its bases become
$\{\barr{e}_2,\barr{b}_{2},\barr{c}_{2}\}$, $\{\dott{e}_2,\dott{b}_{2},\barr{c}_{2}\}$, $\{\dott{e}_2,\barr{b}_{2},\dott{c}_{2}\}$.
In order that $\phi_{e_2}$ is still an isomorphism between $Z_{e_2}$ and $Z_2$, we need to amend $\phi_{e_2}$ so that $\phi_{e_2}(\dott e_2):= \barr a$ and $\phi_{e_2}(\barr e_2):=\dott a$. The resulting tensor product $Z_1 \otimes_{\Phi} Z_2$ has bases
    $\{\barr b_{1},\barr c_{1},\barr b_{2},\barr c_{2}\}$,
    $\{\dott b_{1},\barr c_{1},\dott b_{2},\barr c_{2}\}$,
    $\{\dott b_{1},\barr c_{1},\barr b_{2},\dott c_{2}\}$,
    $\{\barr b_{1},\dott c_{1},\dott b_{2},\barr c_{2}\}$, and
    $\{\barr b_{1},\dott c_{1},\barr b_{2},\dott c_{2}\}$. As the number of bases has changed, we observe that different choices for $\Phi$ may yield non-isomorphic multimatroids.\expleend
\end{example}

\section{Related constructions}\label{sec:isotropic}
This section adds additional context and connects our work with analogous constructions on several related combinatorial structures. 
Our aim is to enable those with a passing familiarity with some of these structures to see the relationships between them, rather than give precise details.
We assume some background knowledge of delta-matroids and isotropic systems: this section may be skipped as the rest of the paper does not depend on its content.

The tensor product of graphs $G$ and $H$ (with a distinguished edge $e$) may be described in terms of two-sums. It is formed by taking the two-sum of $G$ with an isomorphic copy of $H$ along every edge of $G$, so that each two-sum is along an isomorphic copy of $e$.  
This formulation extends easily to matroids~\cite{Brylawski_2010} and using the notion of the two-sum for multimatroids from~\cite{2sumpaper} extends in an analogous way to delta-matroids and multimatroids.

As mentioned in the introduction, multimatroids were introduced by Bouchet~\cite{MM1zbMATH01116184}, partly in an attempt to simultaneously reconcile and generalize his earlier combinatorial structures, namely delta-matroids~\cite{MR904585} and isotropic systems~\cite{MR919874}. 
The definition of the two-sum construction for delta-matroids and multimatroids in~\cite{2sumpaper} was inspired by the matroid two-sum construction~\cite{MR1207587}. Indeed, the construction 
for even delta-matroids is exactly the same as that for matroids specified by their bases.
In this section we compare this construction with the very closely related composition operation for isotropic systems due to  Bouchet~\cite{zbMATH00015493} (but only later given the name composition~\cite{zbMATH01018470}) and more general work by Cunningham and Edmonds~\cite{zbMATH03689441} which developed a general theory of splits in combinatorial structures.

Binary matroids form an extremely important subclass of matroids. By appropriately modifying the definition of representability, one can define the class of binary delta-matroids~\cite{zbMATH04162893,zbMATH00008503}.
Every matroid (when specified by its bases) can be thought of as a delta-matroid and the concepts of representability are consistent in the sense that a matroid is binary if and only if it is binary when considered as a delta-matroid. A consequence of the definition of representability is that a normal binary delta-matroid, that is, one in which the empty set is feasible,
is determined by its feasible sets of size at most two. Thus (as with binary matroids) every binary delta-matroid is determined by specifying one feasible set $F$ and the collection of those feasible sets $F'$ with $|F\bigtriangleup F'|\leq 2$. So, up to twisting, a binary delta-matroid may be described by a graph, known as its \emph{fundamental graph}. Note that this graph describes `feasible set exchanges' and is not related to the delta-matroid in the way that the cycle matroid of a graph  is related to that graph.

If, rather than starting with a binary matroid defined in terms of its bases, instead one starts with its circuit space, and seeks to generalize this concept, then one is led in a highly non-trivial way to Bouchet's isotropic systems~\cite{MR919874}, which, very roughly, incorporate both the circuit and cocircuit spaces of a binary matroid in a subtle way. Bouchet proved that there is a collection of graphs~\cite{zbMATH04081574} associated with each isotropic system, with two graphs belonging to the same such collection if and only if they are locally equivalent. 
(Two graphs are \emph{locally equivalent} if one can be obtained from the other by a sequence of operations each of which involves 
complementing the edges between all neighbours of some vertex.) Up to isomorphism, an isotropic system is determined by any one of these graphs. 
The equivalence of binary delta-matroids and isotropic systems follows from this. More recent combinatorial constructions due to 
Traldi~\cite{zbMATH06381893} and Brijder and Traldi~\cite{MR3577648}, namely isotropic matroids and tight binary $3$-matroids, respectively, are also equivalent to isotropic systems and binary delta-matroids. 
The relationships between these different constructions are discussed in~\cite{MR3577648}.

Cunningham and Edmonds~\cite{zbMATH03689441} construct a general theory of what they call \emph{splits}. Roughly speaking, this involves decomposing a combinatorial structure into two smaller structures with a minimal but non-trivial intersection. They show that providing the collection of possible splits obeys various properties shared by two-sums in graphs, then one can neatly characterize all the possible decompositions obtained by successively forming a split. They give specific specializations to graphs and the collection of circuits of a matroid coming from two-sums. Cunningham~\cite{zbMATH03783038} describes a \emph{join} operation on pairs of graphs or digraphs having a common vertex and considers the corresponding splits. This falls within the general framework of~\cite{zbMATH03689441}.

In~\cite{zbMATH00015493}, Bouchet introduced the connectivity function of an isotropic system, which is analogous to the connectivity function of a matroid~\cite{MR1207587}. Partitions of the ground set of an isotropic system $\mathcal I$ with connectivity one correspond to the splits
from~\cite{zbMATH03783038} in each fundamental graph of $\mathcal I$, something which Bouchet exploited to prove an analogue of Tutte's `Wheels and Whirls Theorem' for isotropic systems. Bouchet defines the \emph{composition} $I$ of two isotropic systems $I_1$ and $I_2$ to be such that any fundamental graph of $I$ is the join of the corresponding fundamental graphs of $I_1$ and $I_2$.  
The composition operation was developed further in~\cite{zbMATH01018470}.

By considering the effect of the composition operation on the collection of feasible sets of the associated normal binary delta-matroids, one obtains a notion of \emph{composition} for delta-matroids~\cite{zbMATH00750738}. For normal binary delta-matroids, the consistency of the definitions of composition in terms of fundamental graphs and 
feasible sets is explained by Lemma~4.4 of \cite{geelen-thesis}. The identity proved there also follows from the proof of Proposition~2.3 of~\cite{zbMATH01355177}. However, the notion of composition
does not extend to multimatroids in a canonical way. 
Moreover it is not consistent with the existing notion of the two-sum in ribbon graphs~\cite{Huggett_2011,maya2}, nor with that in matroids. Indeed, the composition of two matroids (considered as delta-matroids) would not be a matroid. Thus the definition of the two-sum for delta-matroids given in Section~\ref{sec:delta} is not exactly Cunningham and Bouchet's composition operation, although for even delta-matroids the difference is only slight: it follows immediately from the definitions in terms of feasible sets that if $D_1$ and $D_2$ are two even delta-matroids with a common element $e$, then the composition of  $D_1$ and $D_2$ is $D_1 \oplus_2 (D_2 \ast \{e\})$, where $\ast$ denotes the twist operation discussed in Section~\ref{sec:delta}.

Traldi has obtained an expression for the weighted interlace polynomial of the composition of two graphs~\cite{zbMATH05700345}. The weighted interlace polynomial of a graph $G$ coincides with the weighted transition polynomial of the normal binary delta-matroid described by $G$. 
Thus Theorem~\ref{thm:vfsafe2sum} generalizes Traldi's result with the proviso that the definitions of composition and $2$-sum do not exactly coincide, as we have explained in this section.

\section{The transition polynomial}\label{sec:results}
This section contains our main results: expressions for the transition polynomial of a two-sum and a star product, and an extension of Brylawski's tensor product formula to tight multimatroids. 
The \emph{weighted transition polynomial} of a multimatroid $Z=(U,\Omega,r)$, introduced in~\cite{MR3191496}, is
\[ Q(Z;\{x_u\}_{u\in U}, t):= \sum_{T\in \mathcal T(Z)} t^{n(T)}\vect x_T ,\]
where $\vect x_T=\prod_{u\in T}  x_u$. We often abbreviate $\{x_u\}_{u \in U}$ to $\vect x$, when $U$ is clear from the context, and sometimes just write $Q(Z)$ for $Q(Z;\vect x,t) = Q(Z;\{x_u\}_{u\in U}, t)$.

Notice that if $U=\emptyset$, then $\mathcal T(\Omega)=\{\emptyset\}$, so $Q(Z;\vect x,t)=1$.
Here we treat  the 
$x_u$
as formal variables, giving a polynomial in 
$\mathbb{Z}[t, \{x_u\}_{u\in U}]$.
However they can be, and often are, treated as parameters.

\begin{example}\label{ex:q1}
Let $Z$ be the multimatroid in Example~\ref{example1}. Then the transition polynomial $Q(Z;\vect x, t)$ has 27 summands. Sixteen of these come from the bases and give the $t^0$ terms, the transversal $\{\barr{a},\barr{b},\dott{c}\}$ gives the only $t^2$ term, and the remaining eleven transversals give the $t^1$ terms. So the transition polynomial takes the form
\[
(x_{\dott{a}}x_{\barr{b}}x_{\barr{c}}
+x_{\barr{a}}x_{\dott{b}}x_{\barr{c}}
+\cdots +
x_{\barr{a}}x_{\hatt{b}}x_{\hatt{c}})
+
t(
x_{\dott{a}}x_{\dott{b}}x_{\barr{c}}+
\cdots +
x_{\barr{a}}x_{\barr{b}}x_{\hatt{c}}
)
+
t^2(x_{\barr{a}}x_{\barr{b}}x_{\dott{c}})
.\]
In practice, it is common to consider some substitution or specialisation of the variables $x_T$. For example, taking, for each $\sk{e}$, $x_{\dott{e}}=u$, $x_{\barr{e}}=v$, $x_{\hatt{e}}=0$ gives a polynomial
\[Q(Z;(u,v,0), t) =  
(u^3+2uv^2)+t(3u^2v+v^3) +t^2uv^2,
  \]
that is related to the Tutte polynomial of a delta-matroid or ribbon graph, as described in Sections~\ref{sec:delta} and~\ref{sec:rg}, respectively.
\expleend
\end{example}
To describe a substitution or specialisation of the variables we often abuse notation slightly and write expressions such as 
$Q(Z; \{x_u=y_u\}_{u \in U},t)$ meaning that each variable $x_u$ is set to $y_u$ or
$Q(Z; \vect x,t\mid  x_{e_i} = y_{e_i}, i=1,\ldots,q)$ by which we mean that for $i=1,\ldots, q$, the variable $x_{e_i}$ should be set to $y_{e_i}$. All the other variables are unaffected. When we do this, $y_{e_i}$ may be a polynomial in $t$ and variables indexed by elements of another multimatroid. Clearly such a substitution changes the polynomial ring to which $Q(Z)$ belongs.

\medskip

We first give an expression for the transition polynomial of a two-sum, $Z_1\oplus_2 Z_2$, showing that it may be obtained from $Q(Z_1)$ by setting the variables 
indexed by the elements of the skew class along which the two-sum is formed to appropriate polynomials in $t$ and variables indexed by some of the elements of $Z_2$.

We need the following result which is one case of Theorem~6 from~\cite{MR3191496}. 

\begin{proposition}[{\cite[Theorem 6]{MR3191496}}]\label{prop:delcon}
Let $Z:=(U,\Omega,r)$ be a multimatroid and let $e$ be a non-singular skew class of $\Omega$. We have
\[
Q(Z;\vect x,t) =
\sum_{u\in e} x_u Q(Z\res u;\vect x', t),
\]
where $\vect x'$ is obtained from $\vect x$ by removing the entries indexed by the elements of $e$.
\end{proposition}

The two-sum formula is given by the following theorem.
\begin{theorem}\label{thm:trans2sum}
Let $Z_1$ and $Z_2$ be tight multimatroids that align at a non-singular skew class $e:=\{e_1,\ldots,e_q\}$. Let 
$U:=U(Z_1 \oplus_2 Z_2)$
and
$U'_2:=U(Z_2)-e$.
Then
\[
Q(Z_1\oplus_2 Z_2; \{x_u\}_{u \in U},t) =
Q(Z_1; \{x_u\}_{u \in U(Z_1)},t \mid x_{e_i} = y_{e_i}, i=1,\ldots,q),
\]
where $\{y_u\}_{u\in e}$ is the unique solution to the following system of linear equations in the ring $\mathbb{Z}[t, \{x_u\}_{u\in U'_2}]$.  
\begin{samepage}\begin{align*}
        Q(Z_2\res e_1;\{x_u\}_{u \in U'_2},t)&=ty_{e_1}+y_{e_2}+\dots +y_{e_q}, \\
        Q(Z_2\res e_2;\{x_u\}_{u \in U'_2},t)&=y_{e_1}+ty_{e_2}+\dots +y_{e_q}, \\
        & \vdots \\
        Q(Z_2\res e_q;\{x_u\}_{u \in U'_2},t)&=y_{e_1}+y_{e_2}+\dots +ty_{e_q}.    \end{align*}\end{samepage}
\end{theorem}

\begin{proof}
Let $\mathcal {N}_1$ and $\mathcal{N}_2$ denote the sets of near-transversals missing $e$ of $Z_1$ and $Z_2$, respectively. For $i=1,\ldots,q$, let
\[ y_{e_i} =\sum_{\substack{S_2\in \mathcal N_2\\ \theta(S_2)=e_i}} \vect x_{S_2} t^{n(S_2)} .\]
Thus 
\[ Q(Z_2;\{x_u\}_{u \in U(Z_2)},t) = \sum_{i=1}^q y_{e_i} (x_{e_1} + \cdots  + x_{e_{i-1}} + tx_{e_i} + x_{e_{i+1}} + \cdots + x_{e_q}).\]
By considering the monomials of $Q(Z_2)$ containing $x_{e_i}$ and using Proposition~\ref{prop:delcon}, we deduce that for $i=1,\ldots,q$,
\[ Q(Z_2\res e_i; \{x_u\}_{u \in U'_2},t) = y_{e_1} + \cdots + y_{e_{i-1}} + ty_{e_i} + y_{e_{i+1}} + \cdots + y_{e_q}.\]
This gives a system of $q$ simultaneous equations in
$\mathbb{Z}[t, \{x_u\}_{u\in U'_2}]$ for $y_{e_1}$, \ldots, $y_{e_q}$. By construction, these have a solution and because the matrix of the coefficients of the simultaneous equations has non-vanishing determinant, the solution is unique.
Therefore $y_{e_1}$, \ldots, $y_{e_q}$ may be recovered from $Q(Z_2\res e_1)$, \ldots, $Q(Z_2\res e_q)$, (and consequently from $Q(Z_2)$).

We have
\begin{align*}
Q(Z_1 \oplus_2 Z_2) &= \sum_{S_1\in \mathcal N_1} \sum_{S_2\in \mathcal N_2} \vect x_{S_1} \vect x_{S_2} t^{n(S_1 \cup S_2)} \\
&= \sum_{S_1\in \mathcal N_1} \sum_{i=1}^q \sum_{\substack{S_2\in \mathcal N_2 \\ \theta(S_2)=e_i}} \vect x_{S_1} \vect x_{S_2} t^{n(S_1 \cup S_2)}\\
&= \sum_{S_1\in \mathcal N_1} \vect x_{S_1} t^{n(S_1)} \Big(\sum_{\substack{1 \leq i \leq q \\ e_i \ne \theta(S_1)}} \sum_{\substack{S_2\in \mathcal N_2\\ \theta(S_2)=e_i}} \vect x_{S_2} t^{n(S_2)} + 
\sum_{\substack{ S_2\in \mathcal N_2\\ \theta(S_2)=\theta(S_1)}}   \vect x_{S_2} t^{n(S_2)+1}\Big)\\
&= \sum_{S_1\in \mathcal N_1} \vect x_{S_1} t^{n(S_1)} \Big(\sum_{\substack {1 \leq i \leq q \\ e_i \ne \theta(S_1)}} y_{e_i} +  y_{\theta(S_1)} t\Big)\\
&= Q( Z_1;\{x_u\}_{u \in U(Z_1)},t\mid  x_{e_i} = y_{e_i}, i=1,\ldots,q ),
\end{align*}
as required.
\end{proof}

\medskip
We now use Theorem~\ref{thm:trans2sum} to obtain an expression for the star product of tight multimatroids.

\begin{theorem}\label{thm:star}
Suppose that $Z_1$ is a tight multimatroid with carrier $(U_1,\Omega_1)$. Assume that each skew class $e$ of $Z_1$ is non-singular and has the form $e=\{e_1,\ldots,e_{q_e}\}$, where $q_e=|e|$. 
Now suppose that $\mathcal Z=\{Z_e:e \in \Omega_1\}$ is a collection of tight multimatroids indexed by the skew classes of $Z_1$ such that, for each $\sk{e}$, $Z_1$ and $Z_e$ align at the non-singular skew class $\sk{e}$, and that the graph of $\mathcal Z \cup\{Z_1\}$ is a star with 
central vertex $Z_1$.
Let $Z'$ denote the star product of $Z_1$ and $\mathcal Z$ and let $U:=U(Z')$. Then  
        \[ Q(Z'; \{x_u\}_{u \in U},t )=Q(Z_1;\{x_{e_{i}}=y_{e_i}\}_{e \in \Omega_1,\, 1 \leq i \leq q_e}),\]
        where for each $e \in \Omega_1$, $\{y_u\}_{u\in e}$ 
        is the unique solution to the following system of linear equations in the ring $\mathbb{Z}[t, \{x_u\}_{u\in U(Z_e)-e}]$.  
\begin{align*}
        Q(Z_e\res e_{1};\{x_u\}_{u\in U(Z_e)-e},t)&=ty_{e_1}+y_{e_2}+\dots +y_{e_{q_e}}, \\
        Q(Z_e\res e_{2};\{x_u\}_{u\in U(Z_e)-e},t)&=y_{e_1}+ty_{e_2}+\dots +y_{e_{q_e}}, \\
        & \vdots \\
        Q(Z_e\res e_{q_e};\{x_u\}_{u\in U(Z_e)-e},t)&=y_{e_1}+y_{e_2}+\dots +ty_{e_{q_e}}.
    \end{align*}
\end{theorem}

\begin{proof}
    The theorem follows from Theorem~\ref{thm:trans2sum} and Proposition~\ref{prop:sums}. Forming the star product of $Z_1$ and $\mathcal Z$ requires $|\Omega_1|$ two-sums, which by Proposition~\ref{prop:sums} may be performed in any order. For $k=0,\ldots,|\Omega_1|$, let $Z^k$ denote the multimatroid formed after $k$ of these two-sums. Then $Z^0=Z_1$, $Z^{|\Omega_1|}=Z'$ and for $k=1,\ldots,|\Omega_1|$, $Z^k=Z^{k-1}\oplus_2 Z_e$ for some $e$ in $\Omega_1$. 
Fix $k$ and suppose that $Z^k=Z^{k-1}\oplus_2 Z_e$. 
By Theorem~\ref{thm:trans2sum}, $Q(Z^k)$ may be obtained from $Q(Z^{k-1})$ by making an appropriate substitution, depending on $Q(Z_e)$, for the variables of $Q(Z^{k-1})$ indexed by elements of $e$ and these new variables are unchanged thereafter. Moreover, by Theorem~\ref{thm:trans2sum}, the correct substitution for $x_{e_i}$, $i=1,\ldots,q_e$, is the one given in the theorem statement. The result follows by an easy induction on $k$.
\end{proof}

Using Theorem~\ref{thm:star} we obtain an expression for the tensor product of two multimatroids. Suppose that $\phi$ is an isomorphism between multimatroids $Z_1$ and $Z_2$. It is useful to abuse notation and also use $\phi: \ints[t, \{x_u\}_{u \in U(Z_1)} ]\rightarrow \ints[t, \{x_u\}_{u \in U(Z_2)} ]$ for the ring isomorphism with $\phi(x_u)=x_{\phi(u)}$ and $\phi(t)=t$.

\begin{theorem}\label{thm:main}
Let $q\geq 2$ and suppose that $Z_1$ is a tight $q$-matroid with set $\Omega_1$ of skew classes. Assume that each skew class $e$ in $\Omega_1$ is non-singular and has the form $e=\{e_1,\ldots,e_{q}\}$.
Now suppose that $\mathcal Z=\{Z_e:e \in \Omega_1\}$ is a collection of tight multimatroids indexed by $\Omega_1$ having pairwise disjoint ground sets so that for each $e$ in $\Omega_1$, $Z_1$ and $Z_e$ align at the non-singular skew class $e$.
Finally suppose that $Z_2$ is a tight multimatroid with a non-singular skew class $a:=\{a_1,\ldots,a_q\}$ so that for each $e \in \Omega_1$, the multimatroid $Z_e$ is isomorphic to $Z_2$ via an isomorphism $\phi_e$ with $\phi_e(e)=a$, let $\Phi := \{\phi_e: e\in \Omega_1\}$ and let $U'_2=U(Z_2)-a$. Then
\[ Q(Z_1 \otimes_{\Phi} Z_2)
= Q(Z_1;
\{x_{e_{i}}=\phi_e^{-1}(y_{\phi_e(e_i)})\}_{e \in \Omega_1, 1\leq i \leq q},t),\]
where
$\{y_u\}_{u\in a}$
        is the unique solution to the following system of linear equations in the ring $\mathbb{Z}[t, \{x_u\}_{u\in U'_2}]$.  
\begin{align*}
        Q(Z_2\res a_{1};
        \{x_u\}_{u\in U'_2},t)&=ty_{a_1}+y_{a_2}+\dots +y_{a_{q}}, \\
        Q(Z_2\res a_{2};
        \{x_u\}_{u\in U'_2},t)&=y_{a_1}+ty_{a_2}+\dots +y_{a_{q}}, \\
        & \vdots \\
        Q(Z_2\res a_{q};\{x_u\}_{u\in U'_2},t)&=y_{a_1}+y_{a_2}+\dots +ty_{a_{q}}.
    \end{align*}
\end{theorem}
\begin{proof}
First note that the conditions on the carriers of $Z_1$ and the multimatroids in $\mathcal Z$ ensure that the graph of $\{Z_1\} \cup \mathcal Z$ is a star with 
central vertex $Z_1$.
 Thus the star product of $Z_1$ and $\mathcal Z$ is well defined and is isomorphic
 to $Z_1 \otimes_{\Phi} Z_2$. 

Using Theorem~\ref{thm:star}, it is enough to show that for each $e$ in $\Omega_1$ and
$i=1,\ldots,q$,
we have $\phi_e^{-1}(y_{\phi_e(e_i)})=y_{e_i}$, where $\{y_u\}_{u\in e}$ is the unique solution to the following system of linear equations.
\begin{align*}
        Q(Z_e\res e_{1};\vect x,t)&=ty_{e_1}+y_{e_2}+\dots +y_{e_{q}}, \\
        Q(Z_e\res e_{2};\vect x,t)&=y_{e_1}+ty_{e_2}+\dots +y_{e_{q}}, \\
        & \vdots \\
        Q(Z_e\res e_{q};\vect x,t)&=y_{e_1}+y_{e_2}+\dots +ty_{e_{q}}.
    \end{align*}
Fix $e$ in $\Omega$. Then, as $\phi_e$ is an isomorphism, we need to show that for $i=1,\ldots,q$, we have
$y_{\phi_e(e_i)} = \phi_e(y_{e_i})$.
For $i=1,\ldots,q$ we have
\[ Q(Z_2\res  \phi_e(e_i)) = y_{\phi_e(e_1)} + \cdots + y_{\phi_e(e_{i-1})} + ty_{\phi_e(e_i)} + y_{\phi_e(e_{i+1})} + \cdots + y_{\phi_e(e_q)}. \]
On the other hand,
\begin{align*}
\MoveEqLeft{\phi_e(y_{e_1}) + \cdots + 
\phi_e(y_{e_{i-1}}) +
t\phi_e(y_{e_i}) +
\phi_e(y_{e_{i+1}}) + \cdots + \phi_e(y_{e_q})} \\
&=\phi_e(y_{e_1} + \cdots + y_{e_{i-1}}
+ ty_{e_{i}} + y_{e_{i+1}} + \cdots + y_{e_q}) \\&= \phi_e(Q(Z_e\res e_i)).\end{align*}
As the restriction of $\phi_e$ to $U(Z_2)-e$ is an isomorphism from $Z_e|e_i$ to $Z_2|\phi(e_i)$, we have $\phi_e(Q(Z_e\res e_i))= Q(Z_2\res  \phi_e(e_i))$.
Thus 
$(y_{\phi_e(e_1)}, \ldots,y_{\phi_e(e_q)} )$ and  $(\phi_e(y_{e_1}), \ldots,\phi_e(y_{e_q}) )$ satisfy the same set of simultaneous equations which are known to have a unique solution by Theorem~\ref{thm:star}.
This completes the proof.
\end{proof}

\begin{example}
Let $Z_1$, $Z_2$ and $\Phi$ be as described at the beginning of Example~\ref{ex:smtens}. Then
\begin{align*}
Q(Z_1;\vect x,t) &=
tx_{\dott e_{1}}x_{\dott e_{2}}+x_{\dott e_{1}}x_{\barr e_{2}}+x_{\barr e_{1}}x_{\dott e_{2}}+tx_{\barr e_{1}}x_{\barr e_{2}},
\\
Q(Z_2\res \dott a;\vect x,t) &=
t^2x_{\dott b}x_{\dott c}+tx_{\dott b}x_{\barr c}+tx_{\barr b}x_{\dott c}+x_{\barr b}x_{\barr c},
\\
Q(Z_2\res \barr a;\vect x,t) &=
tx_{\dott b}x_{\dott c}+x_{\dott b} x_{\barr c}+x_{\barr b}x_{\dott c}+tx_{\barr b}x_{\barr c}.
\end{align*}
We obtain $y_{\dott a}$ and $y_{\barr a}$ as the unique solution to
\begin{align*}
    t^2x_{\dott b}x_{\dott c}+tx_{\dott b}x_{\barr c}+tx_{\barr b}x_{\dott c}+x_{\barr b}x_{\barr c} &= t y_{\dott a} + y_{\barr a},
    \\
    tx_{\dott b}x_{\dott c}+x_{\dott b} x_{\barr c}+x_{\barr b}x_{\dott c}+tx_{\barr b}x_{\barr c} &= y_{\dott a} + t y_{\barr a}.
\end{align*}

Thus 
\begin{align*}
y_{\dott a}&=tx_{\dott b}x_{\dott c}+x_{\dott b}x_{\barr c}+x_{\barr b}x_{\dott c},
\\
y_{\barr a}&=x_{\barr b}x_{\barr c}.
\end{align*}

For $i=1,2$ we have $\phi_{e_i}(\dott e_i)=\dott a$ and $\phi_{e_i}(\barr e_i)=\barr a$. So the substitution 
from Theorem~\ref{thm:main}
is given by
\begin{align*}
    x_{\dott e_1} &=  \phi^{-1}_{e_1} (y_{\dott a}) = tx_{\dott b_1}x_{\dott c_1}+x_{\dott b_1}x_{\barr c_1}+x_{\barr b_1}x_{\dott c_1},\\
    x_{\barr e_1} &= \phi^{-1}_{e_1} (y_{\barr a}) = x_{\barr b_1}x_{\barr c_1}, \\
    x_{\dott e_2} &= \phi^{-1}_{e_2} (y_{\dott a})= tx_{\dott b_2}x_{\dott c_2}+x_{\dott b_2}x_{\barr c_2}+x_{\barr b_2}x_{\dott c_2},\\
    x_{\barr e_2} &= \phi^{-1}_{e_2} (y_{\barr a})=x_{\barr b_2}x_{\barr c_2}.
\end{align*}

So we get 
\begin{align*} \MoveEqLeft{Q(Z_1\otimes_{\Phi} Z_2)}\\
&= 
t(tx_{\dott b_{1}}x_{\dott c_{1}}+x_{\dott b_{1}}x_{\barr c_{1}}+x_{\barr b_{1}}x_{\dott c_{1}})    
(tx_{\dott b_{2}}x_{\dott c_{2}}+x_{\dott b_{2}}x_{\barr c_{2}}+x_{\barr b_{2}}x_{\dott c_{2}})\\
& \phantom{=} {} +(tx_{\dott b_{1}}x_{\dott c_{1}}+x_{\dott b_{1}}x_{\barr c_{1}}+x_{\barr b_{1}}x_{\dott c_{1}})    x_{\barr b_{2}}x_{\barr c_{2}}
+x_{\barr b_{1}}x_{\barr c_{1}}   
(tx_{\dott b_{2}}x_{\dott c_{2}}+x_{\dott b_{2}}x_{\barr c_{2}}+x_{\barr b_{2}}x_{\dott c_{2}})\\
& \phantom{=} {}
+tx_{\barr b_{1}}x_{\barr c_{1}}x_{\barr b_{2}}x_{\barr c_{2}}\\
&= t^3x_{\dott b_1}x_{\dott c_1}x_{\dott b_2}x_{\dott c_2} 
+ t^2x_{\dott b_1}x_{\dott c_1}x_{\dott b_2}x_{\barr c_2}
+ t^2x_{\dott b_1}x_{\dott c_1}x_{\barr b_2}x_{\dott c_2}
+ tx_{\dott b_1}x_{\dott c_1}x_{\barr b_2}x_{\barr c_2}\\
&\phantom{=} {}+ t^2x_{\dott b_1}x_{\barr c_1}x_{\dott b_2}x_{\dott c_2}
+ tx_{\dott b_1}x_{\barr c_1}x_{\dott b_2}x_{\barr c_2}
+ tx_{\dott b_1}x_{\barr c_1}x_{\barr b_2}x_{\dott c_2}
+ x_{\dott b_1}x_{\barr c_1}x_{\barr b_2}x_{\barr c_2}\\
&\phantom{=} {}+ t^2x_{\barr b_1}x_{\dott c_1}x_{\dott b_2}x_{\dott c_2}
+ tx_{\barr b_1}x_{\dott c_1}x_{\dott b_2}x_{\barr c_2}
+ tx_{\barr b_1}x_{\dott c_1}x_{\barr b_2}x_{\dott c_2}
+ x_{\barr b_1}x_{\dott c_1}x_{\barr b_2}x_{\barr c_2}\\
&\phantom{=} {}+ tx_{\barr b_1}x_{\barr c_1}x_{\dott b_2}x_{\dott c_2}
+ x_{\barr b_1}x_{\barr c_1}x_{\dott b_2}x_{\barr c_2}
+ x_{\barr b_1}x_{\barr c_1}x_{\barr b_2}x_{\dott c_2}
+ tx_{\barr b_1}x_{\barr c_1}x_{\barr b_2}x_{\barr c_2}.
\end{align*}
\end{example}

\section{Delta-matroids}\label{sec:delta}

In this section we exploit connections between multimatroids and delta-matroids to obtain Brylawski-type formulas for the tensor product of delta-matroids. Theorem~\ref{thm:rgpdm} gives the tensor product formula for the weighted transition polynomial for vf-safe delta-matroids; Theorem~\ref{thm:dmeven} gives the corresponding formula for even delta-matroids.  We conclude the section by rewriting the latter formula in terms of Tutte polynomials of delta-matroids, deriving Brylawski's original formula for matroids as a consequence. 

Delta-matroids and equivalent structures were introduced in the mid 1980s by several authors~\cite{MR904585,zbMATH04070920,zbMATH03985246}, but principally studied by Bouchet.
A \emph{delta-matroid} is a pair $(E,\mathcal{F})$ where $E$ is a finite set called the \emph{ground set} and $\mathcal{F}$ is a non-empty collection of subsets of $E$ satisfying the \emph{symmetric exchange axiom}:
for all triples $(X,Y,u)$ with $X$ and $Y$ in $\mathcal{F}$ and $u\in X\bigtriangleup Y$ (where $\bigtriangleup$ denotes symmetric difference of sets), there is an element $v\in X\bigtriangleup Y$ (perhaps $u$ itself) such that $X\bigtriangleup \{u,v\}$ is in $\mathcal{F}$.
The elements of $\mathcal{F}$ are called \emph{feasible sets}. If $D$ is a delta-matroid with ground set $E$, we sometimes say that $D$ is a delta-matroid on $E$.
A \emph{coloop} of a delta-matroid is an element belonging to every feasible set; a \emph{loop} is an element belonging to no feasible set. Delta-matroids $D_1:=(E_1,\mathcal F_1)$ and $D_2:=(E_2,\mathcal F_2)$ are \emph{isomorphic} if there is a bijection $\mu:E_1\rightarrow E_2$ that preserves feasible sets.

Let $D:=(E,\mathcal F)$ be a delta-matroid with element $e$. Suppose first that $e$ is not a coloop of $D$. Then we define $D\ba e$, the \emph{deletion} of $e$, to be the pair 
\[ (E-\{e\},\{ F \in \mathcal {F} : e\notin F\}).\]
Now suppose that $e$ is not a loop of $D$. Then we define  
$D/ e$, the \emph{contraction} of $e$, to be the pair 
\[ (E-\{e\},\{ F-\{e\} : F \in \mathcal {F} \text{ and } e \in F\}).\]
If $e$ is either a coloop or a loop of $D$, then one of $D\ba e$ and $D/e$ is defined. In this case, we define whichever of $D\ba e$ and $D/e$ is so far undefined by setting $D\ba e = D/e$. It is easy to check that both $D\ba e$ and $D/e$ are delta-matroids. 

Let $D:=(E,\mathcal F)$ be a delta-matroid.
For a subset $A$ of $E$, the \emph{twist} of $D$ by $A$, denoted by $D*A$, is given by $(E,\{X\bigtriangleup A: X\in\mathcal{F}\})$. It is straightforward to show that $D*A$ is indeed a delta-matroid.

Given an element $e$ in $E$, 
following~\cite{zbMATH05982480}, we define the \emph{loop complement} of $D$ by~$e$, which we denote by $D+e$, to be the pair $(E,\mathcal{F}')$, where  \[\mathcal{F}'=:\mathcal{F}\bigtriangleup\{X\cup \{e\} : X\in\mathcal{F}, e\notin X\}.\] Thus a set $A$ not containing $e$ is in $\mathcal F'$ if and only it is in $\mathcal F$, and a set $A$ containing $e$
is in $\mathcal F'$ if and only if precisely one of $A$ and $A-\{e\}$ is in $\mathcal F$.
If $e_1, e_2\in E$, then $(D+e_1)+e_2=(D+e_2)+e_1$, and so for a subset $A$ of $E$ we can unambiguously
define the loop complement $D+A$ of $D$ by $A$, by forming the loop complement with respect to each element of $A$ in turn.
The set of delta-matroids is not closed under loop complementation. A delta-matroid is said to be \emph{vf-safe} if the application of every sequence of twists and loop complements results in a delta-matroid. Any delta-matroid that may be obtained from a delta-matroid $D$ by a sequence of twists and loop complementations is said to be a $\emph{twisted dual}$ of $D$.

It was been shown in~\cite{zbMATH05982480} that the operations of twisting ${}\ast e$ and loop complementation ${}+e$ with respect to an element $e$ are involutions and can be used to define an action of the symmetric group $S_3$ on the set of vf-safe delta-matroids. 
 The relevance of this for us is that there is a third involution, ${}+e\ast e+e={}\ast e+e\ast e$, which we denote by ${}\barrast e$. 
For a vf-safe delta-matroid $D$ with element $e$, we use 
$D\barrast e$ to denote the delta-matroid 
$D+e\ast e+e$.

\medskip
Bouchet established the relationship between $2$-matroids and delta-matroids in~\cite{MR904585}. Describing and using this relationship places a strain on the notation, so to aid the exposition, we say that a multimatroid with carrier $(U,\Omega)$ is \emph{indexed} by a finite set $E$ if $\Omega=\{\omega_e:e \in E\}$.
\begin{remark}
    Clearly a multimatroid with carrier $(U,\Omega)$ is always indexed by $\Omega$. Nevertheless, we choose to use some other set $E$ as the indexing set as we will use it for the ground set of a delta-matroid associated with the multimatroid and, for clarity, we prefer that the elements 
    of the delta-matroid are not sets of elements of the associated multimatroid.
    This choice becomes even more compelling in Section~\ref{sec:rg}, where we construct a  multimatroid from a ribbon graph. Here, the skew classes are indexed by a set $E$ of discs in a surface and these discs cannot be taken to be the skew classes, which are finite sets. Thus a bijection between the skew classes and discs will always be needed. 
    \expleend 
\end{remark}

Now suppose that we have a $2$-matroid with carrier $(U,\Omega)$ indexed by $E$. Fix a transversal $T$ of $\Omega$. For a subtransversal $S$ of $\Omega$, we let 
\[ T(S) = \{e \in E: T \cap \omega_e = S\cap \omega_e\}.\]
\begin{theorem}[Bouchet~\cite{MR904585}]\label{thm:equivalence}
Let $Z:=(U,\Omega,r)$ be a $2$-matroid indexed by $E$ and let $T$ be a transversal of $\Omega$. Let $\mathcal F := \{T(B):B\in \mathcal B(Z)\}$.
Then $(E,\mathcal F)$ is a delta-matroid.
\end{theorem}
We denote the delta-matroid described in the theorem by $D(Z,T)$. 
The process is reversible and every $2$-matroid may be constructed from a delta-matroid by reversing the 
process. Given a delta-matroid $D:=(E,\mathcal F)$, let $E_1$ and $E_2$ denote two disjoint copies of $E$. Now let $\mu_1$ and $\mu_2$ be bijections from $E$ to $E_1$ and $E_2$, respectively. Then we obtain a $2$-matroid from $D$ indexed by $E$ with carrier $(E_1\cup E_2, \{\omega_e: e\in E\})$, so that
for all $e$ we have $\omega_e=\{ \mu_1(e), \mu_2(e)\}$,
and collection of bases $\{\mu_1(F) \cup \mu_2(E-F) : F \in \mathcal F\}$. 
Given a delta-matroid $D:=(E,\mathcal F)$ we use $Z_{[2]}(D,\mu_1,\mu_2)$ to denote this $2$-matroid. We have $D=D(Z_{[2]}(D,\mu_1,\mu_2),\mu_1(E))$. 

To simplify notation when constructing a \mbox{$2$-matroid} from a delta-matroid $D:=(E,\mathcal F)$, we will always take $E_1:=\dott E:=\{\dott e:e \in E\}$ and $E_2:=\barr E:=\{\barr e:e\in E\}$, $\mu_1(e)=\dott e$ and $\mu_2(e)=\barr e$, and we write $Z_{[2]}(D):=Z_{[2]}(D,\mu_1,\mu_2)$.

For a delta-matroid $D$ with element $e$,
\begin{align*}
Z_{[2]}(D\con e) = Z_{[2]}(D)\res\dott{e}, \quad\text{and} \quad Z_{[2]}(D\ba e) = Z_{[2]}(D)\res\barr{e}.
\end{align*}
Although this was surely known to Bouchet, we cannot find an explicit statement in his work.  However, details may be found in~\cite[Lemma~19]{MR3191496}.

If the cardinalities of the feasible sets in a delta-matroid all have the same parity, then the delta-matroid is \emph{even}.  In~\cite{MR1845490} it was shown that $D$ is an even delta-matroid if and only if $Z_{[2]}(D)$ is tight.

\medskip

More recently,  in~\cite{MR3191496} Brijder and Hoogeboom established the relationship between tight 3-matroids and vf-safe delta-matroids. Following Bouchet~\cite{MM1zbMATH01116184}, given a multimatroid $Z:=(U,\Omega,r)$ and a subset $A$ of its elements, we define $Z[A]$ to be the multimatroid $(A,\Omega',r')$ so that 
\[ \Omega' = \{ \sk{e} \cap A : \sk{e} \in \Omega, \sk{e} \cap A \ne \emptyset\} \]
and $r'$ is the restriction of $r$ to subtransversals of $(A,\Omega')$. Clearly, if $Z$ is a \mbox{$3$-matroid} and $T$ is a transversal of $Z$, then $Z[U(Z)-T]$ is a $2$-matroid. Brijder and Hoogeboom~\cite{MR3191496} proved the following analogue of Theorem~\ref{thm:equivalence}.

\begin{theorem}[Brijder and Hoogeboom~\cite{MR3191496}]\label{thm:3equivalence}
Let $Z:=(U,\Omega,r)$ be a tight $3$-matroid indexed by $E$ and let $T_1$ and $T_2$ be disjoint transversals of $\Omega$. Let $\mathcal F := \{T_1(B):B\in \mathcal B(Z[T_1\cup T_2])\}$.
Then $(E,\mathcal F)$ is a vf-safe delta-matroid.
\end{theorem}
We denote the delta-matroid described in the theorem by $D(Z,T_1,T_2)$. 
Again, as shown by Brijder and Hoogeboom~\cite{MR3191496}, the process is reversible. 
Given a vf-safe delta-matroid $D:=(E,\mathcal{F})$, we obtain its corresponding (indexed) tight \mbox{3-matroid} 
$Z_{[3]}(D)$
as follows. Let $U=\dott{E}\cup\barr{E}\cup\hatt{E}$ and $\Omega=\{\{\dott{e},\barr{e},\hatt{e}\}:e\in E\}$. 
It is not difficult to see that the tightness condition 
implies that for every delta-matroid $D$ on $E$, there is at most one tight $3$-matroid $Z$ with carrier $(U,\Omega)$ such that $Z[\dott E \cup \barr E]=Z_{[2]}(D)$. 
By giving an explicit construction
Brijder and Hoogeboom showed in~\cite{MR3191496} that $Z$ exists if and only if $D$ is vf-safe. When $D$ is vf-safe, we take $Z_{[3]}(D)$ to be this $Z$. Beyond the fact that a transversal $T$ of $Z_{[2]}(D)$ is a basis of $Z_{[3]}(D)$ if and only if it is a basis of $Z_{[2]}(D)$, the precise form of $\mathcal B(Z_{[3]}(D))$ will not concern us. 
Thus $Z_{[2]}(D) = Z_{[3]}(D)[\dott E \cup \barr E]$, so 
\[ D=D(Z_{[3]}(D)[\dott E\cup \barr E],\dott E) = D(Z_{[3]}(D),\dott E,\barr E).\]
Up to isomorphism, every tight 3-matroid arises from a vf-safe delta-matroid in this way. 

\begin{example}
    Let $D$ be the delta-matroid on $\{a,b,c\}$ with collection of feasible sets $\{\{a\},\{b\},\{a,b,c\}\}$. Then its $2$-matroid 
    $Z_{[2]}(D)$ has skew classes $\{\dott a, \barr a\}$, $\{\dott b, \barr b\}$ and $\{\dott c,\barr c\}$. Its bases are $\{\dott a,\barr b, \barr c\}$, $\{\barr a,\dott b,\barr c\}$ and $\{\dott a, \dott b, \dott c\}$. The $3$-matroid $Z_{[3]}(D)$ is the $3$-matroid $Z$ from Example~\ref{example1}.
    \expleend
\end{example}

It has also been shown by Brijder and Hoogeboom in~\cite[Lemma 19]{MR3191496} that for a vf-safe delta-matroid $D:=(E,\mathcal{F})$ with element $e$,  
\begin{equation}\label{eq:z3dmmin}
Z_{[3]}(D\con e) = Z_{[3]}(D)\res \dott{e}, \quad Z_{[3]}(D\ba e) = Z_{[3]}(D)\res \barr{e}, \quad Z_{[3]}(D+ e\con e) = Z_{[3]}(D)\res \hatt{e}.
\end{equation}

Notice that if $D_1$ and $D_2$ are isomorphic delta-matroids, then $Z_{[2]}(D_1)$ and $Z_{[2]}(D_2)$ are isomorphic, as are $Z_{[3]}(D_1)$ and $Z_{[3]}(D_2)$ when $D_1$ and $D_2$ are vf-safe. 
Any isomorphism $\phi:E(D_1)\rightarrow E(D_2)$ leads to an isomorphism $\phi^{[2]}$ of 
$Z_{[2]}(D_1)$ and 
$Z_{[2]}(D_2)$ with $\phi^{[2]}(\dott e) = \dott {({\phi(e)})}$ and $\phi^{[2]}(\barr e) = \barr {\phi(e)}$.
Similarly, when $D_1$ and $D_2$ are vf-safe we get an isomorphism $\phi^{[3]}$ of 
$Z_{[3]}(D_1)$ and 
$Z_{[3]}(D_2)$ with $\phi^{[3]}(\dott e) = \dott {({\phi(e)})}$, $\phi^{[3]}(\barr e) = \barr {\phi(e)}$
and $\phi^{[3]}(\,\hatt e\,) = \hatt {\phi(e)}$. 

But the converses of these assertions are false. For example, if $D_0:=(\{a\},\{\emptyset\})$, $D_1:=(\{a\},\{\{a\}\})$ and $D_2:=(\{a\},\{\{a\},\emptyset\})$, then $Z_{[2]}(D_0) \cong 
Z_{[2]}(D_1)$ and 
$Z_{[3]}(D_0) \cong 
Z_{[3]}(D_1) \cong Z_{[3]}(D_2)$, but $D_0$, $D_1$ and $D_2$ are pairwise non-isomorphic.

Observe that both $\phi^{[2]}$ and $\phi^{[3]}$ are \emph{decoration-preserving} in the sense they map an element with a particular decoration to one with the same decoration. As a result, the expression in Theorem~\ref{thm:rgpdm} for the transition polynomial of the tensor product of two vf-safe delta-matroids is slightly simpler than the corresponding expression for the tensor product of two $3$-matroids.

\begin{proposition}[Bouchet~\cite{MM2zbMATH01119073},  Brijder and Hoogeboom~\cite{MR3191496}]
    Let $D_1$ and $D_2$ be delta-matroids. Then $Z_{[2]}(D_1) \cong Z_{[2]}(D_2)$ if and only if $D_2$ is isomorphic to a twist of $D_1$ and, when $D_1$ and $D_2$ are vf-safe, $Z_{[3]}(D_1) \cong Z_{[3]}(D_2)$ if and only if $D_2$ is isomorphic to a twisted dual of $D_1$
\end{proposition}

The fact that isomorphism of delta-matroids is a finer notion than isomorphism of the corresponding multimatroid will be of relevance to the definition of the tensor product in what follows. Notice that if $D_1$ and $D_2$ are isomorphic then $D_1$ is even if and only if $D_2$ is even, and $D_1$ is vf-safe if and only if $D_2$ is vf-safe. 

To define the two-sum of two delta-matroids, we need the concept of a singular element in a delta-matroid. An element $e$ of a delta-matroid $D$ is \emph{singular} if $e$ is a coloop, $e$ is a loop or $e$ has the property that for every feasible set $F$ of $D$, the set $F\bigtriangleup e$ is also feasible. A singular element of an even delta-matroid is either a coloop or a loop, and $e$ is a singular element of an even delta-matroid $D$ if and only if $\{\dott e,\barr e\}$ is a singular skew class of $Z_{[2]}(D)$. It may also be shown that $e$ is a singular element of a vf-safe delta-matroid $D$ if and only if $\{\dott e, \barr e, \hatt e\}$ is a singular skew class of $Z_{[3]}(D)$. (This is shown in Lemma~19 of~\cite{MR3191496}, although the statement of the result there uses a slightly different definition of minor from what we have used here. See also Remark~20 of~\cite{MR3191496}.) Note that any isomorphism of delta-matroids $D_1$ and $D_2$ maps singular elements of $D_1$ to singular elements of $D_2$ and vice versa. 

We are now ready to give the two-sum operation for delta-matroids, as defined in~\cite{2sumpaper}. First, we deal with even delta-matroids.
Let $D_1:=(E_1,\mathcal F_1)$ and $D_2:=(E_2,\mathcal F_2)$ be two even delta-matroids with $E_1\cap E_2 =\{e\}$, where $e$ is non-singular in both $D_1$ and $D_2$. 
Then, $D_1 \oplus_2 D_2$ has ground set $(E_1 \cup E_2)-\{e\}$ and collection of feasible sets
\[ \{(F_1 \cup F_2) -\{e\} : F_1 \in \mathcal F_1, F_2 \in \mathcal F_2, F_1 \bigtriangleup F_2 = \{e\}\}.\]
Note that~\cite{2sumpaper} 
\[ Z_{[2]}(D_1 \oplus_2 D_2)
= Z_{[2]}(D_1) \oplus_2 Z_{[2]}(D_2).\]
Consequently the two-sum of even delta-matroids is even.

Next, we deal with vf-safe delta-matroids. 
Let $D$ be a delta-matroid with element $e$. 
For a feasible set $F$ of $D$, we say that its \emph{$e$-type} is: \emph{$e$-positive}
 if $e \in F$ and $F-\{e\}$ is not feasible; \emph{$e$-negative} if $e\notin F$ and $F\cup\{e\}$ is not feasible; \emph{$e$-mixed} if $F\bigtriangleup \{e\}$ is feasible.
Let $D_1:=(E_1,\mathcal F_1)$ and $D_2:=(E_2,\mathcal F_2)$ be two vf-safe delta-matroids with $E_1\cap E_2 =\{e\}$, where $e$ is non-singular in both $D_1$ and $D_2$. 
Then, $D_1 \oplus_2 D_2$ has ground set $(E_1 \cup E_2)-\{e\}$ and collection of feasible sets
\[ \{(F_1 \cup F_2) -\{e\} : F_1 \in \mathcal F_1, F_2 \in \mathcal F_2, \text{ the $e$-types of $F_1$ and $F_2$ are different}\}.\]
Again, note that~\cite{2sumpaper} 
\begin{equation}  \label{eq:2sumsdeltavf} Z_{[3]}(D_1 \oplus_2 D_2)
= Z_{[3]}(D_1) \oplus_2 Z_{[3]}(D_2).\end{equation}
Consequently the two-sum of vf-safe delta-matroids is vf-safe.
 
If both $D_1$ and $D_2$ are both even and vf-safe, then we have given two apparently different definitions of the two-sum, but even delta-matroids have no $e$-mixed feasible sets and so the definitions coincide.

\medskip

We can develop the two-sum of a delta-matroid in a similar way to the two-sum of a multimatroid, prove an analogue of Proposition~\ref{prop:sums} and define the graph of a collection of delta-matroids. 
If $\mathcal D$ is a collection of delta-matroids 
whose graph
is a tree, then we can again carry out the two-sums corresponding to the edges of the tree in any order without affecting the result. We denote the resulting delta-matroid by $\bigoplus_2 \mathcal D$. We can also define a star product, but we skip straight to the definition of the tensor product, dealing first with the even case.
Let $D_1$ be an even delta-matroid on $E_1$ in which every element is non-singular. 
Let $\mathcal D:=\{D_e: e \in E_1\}$ be a family of even delta-matroids indexed by $E_1$, having pairwise disjoint ground sets with $E_1 \cap E(D_e) = \{e\}$ for every $e$ in $E_1$. Now let $D_2$ be an even delta-matroid with a non-singular element $a$ and for each $e$ in $E_1$ let $\phi_e$ be an isomorphism from $D_e$ to $D_2$ so that $\phi_e(e)=a$. Let $\Phi:=\{\phi_e:e\in E_1\}$. Then we say that $\bigoplus_2 (\{D_1\} \cup \mathcal D)$ is a \emph{tensor product} of $D_1$ and $D_2$ along $a$ and denote it by $D_1\otimes_{\Phi} D_2$.  In contrast with the situation for multimatroids, a tensor product of $D_1$ and $D_2$ is determined up to isomorphism by $D_1$, $D_2$ and $a$, so we often just denote it by $D_1 \otimes_a D_2$. When $D_1$ and $D_2$ are both vf-safe, then we get the definition of tensor product by replacing the word `even' with `vf-safe' in the above definition. 
When both $D_1$ and $D_2$ are both even and both vf-safe, it follows from the fact that the two definitions of the two-sum for such delta-matroids coincide that the two definitions for the tensor product coincide.

The tensor products of delta-matroids and their associated multimatroids are related as follows.

\begin{proposition}\label{cor:dmtp}
Let $D_1$ be an even delta-matroid on $E_1$ in which every element is non-singular. 
Let $\mathcal D:=\{D_e: e \in E_1\}$ be a family of even delta-matroids indexed by $E_1$, having pairwise disjoint ground sets with $E_1 \cap E(D_e) = \{e\}$ for every $e$ in $E_1$. Now let $D_2$ be an even delta-matroid with a non-singular element $a$ and for each $e$ in $E_1$ let $\phi_e$ be an isomorphism from $D_e$ to $D_2$ so that $\phi_e(e)=a$. Let $\Phi:=\{\phi_e:e\in E_1\}$.
Finally, let $\Phi^{[2]}:=\{\phi^{[2]}_e:e\in E_1\}$. Then,
      \[Z_{[2]}(D_1\otimes_{\Phi}D_2)=  Z_{[2]}(D_1)\otimes_{\Phi^{[2]}} Z_{[2]}(D_2).\]
\end{proposition}

\begin{proposition}\label{cor:vfdmtp}
 Let $D_1$ be a vf-safe delta-matroid on $E_1$ in which every element is non-singular. 
Let $\mathcal D:=\{D_e: e \in E_1\}$ be a family of vf-safe delta-matroids indexed by $E_1$, having pairwise disjoint ground sets with $E_1 \cap E(D_e) = \{e\}$ for every $e$ in $E_1$. Now let $D_2$ be a vf-safe delta-matroid with a non-singular element $a$ and for each $e$ in $E_1$ let $\phi_e$ be an isomorphism from $D_e$ to $D_2$ so that $\phi_e(e)=a$. Let $\Phi:=\{\phi_e:e\in E_1\}$.
Finally, let $\Phi^{[3]}:=\{\phi^{[3]}_e:e\in E_1\}$. Then,
      \[Z_{[3]}(D_1\otimes_{\Phi}D_2)=  Z_{[3]}(D_1)\otimes_{\Phi^{[3]}} Z_{[3]}(D_2).\]
\end{proposition}

\medskip

For a delta-matroid $D$ and subset $X$ of its elements, define
\[d_D(X):=\min_{F\in \mathcal{F}}|F\bigtriangleup X|.\]
When $D$ is clear from context, we omit the subscript.

Let $D$ be a vf-safe delta-matroid on $E$, $\boldsymbol{u}=\{u_e\}_{e\in E}$, $\boldsymbol{v}=\{v_e\}_{e\in E}$, $\boldsymbol{w}=\{w_e\}_{e\in E}$ be indexed families  of formal variables, and $t$ be another formal variable. Then, from~\cite{MR3191496}, the \emph{multivariate transition polynomial} $Q(D;(\boldsymbol{u},\boldsymbol{v},\boldsymbol{w}),t)$ can be defined as
\[Q(D;(\boldsymbol{u},\boldsymbol{v},\boldsymbol{w}),t):=\sum_{(\X,\Y,\Z) \in \mathcal{P}_3(E)}    \Big( \prod_{e\in \X}u_e\Big) \Big(  \prod_{e\in \Y}  v_e\Big) \Big( \prod_{e\in \Z}   w_e\Big)  t^{d_{D\barrast\Z}(\X)},\]
where $\mathcal{P}_3(E)$ denotes the set of ordered partitions of $E$ into three blocks. (The blocks may be empty.)  

Using~\cite[Lemma 18]{MR3191496} it may be shown, for a vf-safe delta-matroid $D$ and partition $(\X,\Y,\Z)\in \mathcal{P}_3$, that $d_{D\barrast \Z}(\X)=n_{Z_{[3]}(D)}(\dott{\X}\cup\barr{\Y}\cup\hatt{\Z})$. So we can obtain the multivariate transition polynomial $Q(D;(\boldsymbol{u},\boldsymbol{v},\boldsymbol{w}),t)$ as an instance of the weighted transition polynomial $Q(Z;\boldsymbol{x},t)$ as follows:
\begin{equation} \label{eq:dmttp}Q(D;(\boldsymbol{u},\boldsymbol{v},\boldsymbol{w}),t)=Q(Z_{[3]}(D);\{x_{\dott e}=u_e, x_{\barr e}=v_e, x_{\hatt e}=w_e\}_{e\in E},t).
\end{equation}
 Suppose that $\phi$ is an isomorphism between delta-matroids $D_1$ and $D_2$. 
It is again convenient to use $\phi: \ints[t,\{u_e,v_e,w_e\}_{e\in E(D_1)}] \rightarrow 
\ints[t,\{u_e,v_e,w_e\}_{e\in E(D_2)}]$ for the ring isomorphism with $\phi(u_e)=u_{\phi(e)}$,
$\phi(v_e)=v_{\phi(e)}$,
$\phi(w_e)=w_{\phi(e)}$, and $\phi(t)=t$.

\begin{theorem}\label{thm:vfsafe2sum}
Let $D_1$ and $D_2$ be vf-safe delta-matroids on $E_1$ and $E_2$, respectively, with $E_1 \cap E_2=\{a\}$, such that $a$ is non-singular in both $D_1$ and $D_2$. Let $E:=(E_1\cup E_2)-\{a\}$ and $E_2':=E_2-\{a\}$.
Then
\[ Q(D_1\oplus_2 D_2; \{u_e,v_e,w_e\}_{e\in E},t) =
Q(D_1;\{u_e,v_e,w_e\}_{e \in E_1},t \mid u_a=\dott y, v_a=\barr y, w_a=\hatt y),\]
where $(\dott y,\barr y, \hatt y)$ is the unique solution to the following system of linear equations in the ring $\ints[t,\{u_e,v_e,w_e\}_{e\in E'_2}]$. 
\begin{align*}
    Q(D_2 \con e; \{u_e,v_e,w_e\}_{e\in E_2'},t) &= t\dott y + \barr y + \hatt y,\\
    Q(D_2 \ba e; \{u_e,v_e,w_e\}_{e\in E_2'},t) &= \dott y + t\barr y + \hatt y,\\
    Q(D_2+e\con e; \{u_e,v_e,w_e\}_{e\in E_2'},t) &= \dott y + \barr y + t\hatt y.
\end{align*}
\end{theorem}
\begin{proof}Let $D:=D_1\oplus_{2} D_2$.
By Equation~\eqref{eq:dmttp},  
\[ Q(D;(\boldsymbol{u},\boldsymbol{v},\boldsymbol{w}),t) = Q(Z_{[3]}(D); \{x_{\dott e} = u_e,
x_{\barr e} = v_e,
x_{\hatt e} = w_e\}_{e\in E},t).\]
By~Equation~\eqref{eq:2sumsdeltavf}, $Z_{[3]}(D)=  Z_{[3]}(D_1)\oplus_{2} Z_{[3]}(D_2)$.
So, by Theorem~\ref{thm:trans2sum}, we have
\begin{multline*}
    Q(Z_{[3]}(D) ; \{x_{\dott e} = u_e,
x_{\barr e} = v_e,
x_{\hatt e} = w_e\}_{e\in E},t)\\
= 
    Q(Z_{[3]}(D_1) ; \{x_{\dott e} = u_e,
x_{\barr e} = v_e,
x_{\hatt e} = w_e\}_{e\in E_1-\{a\}},
x_{\dott a}= \dott y,
x_{\barr a}= \barr y,
x_{\hatt a}= \hatt y,
t),
\end{multline*}
where $(\dott y,\barr y,\hatt y)$ is the unique solution to the following system of linear equations in the ring $\mathbb{Z}[t, \{u_e,v_e,w_e\}_{e\in E'_2}]$.  
   \begin{align}
        Q(Z_{[3]}(D_2)\res\dott a;\{x_{\dott e}=u_e,x_{\barr e}=v_e,x_{\hatt e}=w_e\}_{e\in E'_2},t)=&t\dott y+\barr y+\hatt y,\nonumber \\
        Q(Z_{[3]}(D_2)\res\barr a;\{x_{\dott e}=u_e,x_{\barr e}=v_e,x_{\hatt e}=w_e\}_{e\in E'_2},t)=&\dott y+t\barr y+\hatt y,\label{eq:minors}\\
        Q(Z_{[3]}(D_2)\res\hatt a;\{x_{\dott e}=u_e,x_{\barr e}=v_e,x_{\hatt e}=w_e\}_{e\in E'_2},t)=&\dott y+\barr y+t\hatt y.\nonumber
\end{align}
By applying Equation~\eqref{eq:dmttp} once more, 
we see that
\begin{multline*}  Q(Z_{[3]}(D_1) ; \{x_{\dott e} = u_e,
x_{\barr e} = v_e,
x_{\hatt e} = w_e\}_{e\in E_1-\{a\}},
x_{\dott a}= \dott y,
x_{\barr a}= \barr y,
x_{\hatt a}= \hatt y,
t),\\
= Q(D_1;\{u_e,v_e,w_e\}_{e \in E_1},t \mid u_a=\dott y, v_a=\barr y, w_a=\hatt y).
\end{multline*}
Finally by Equations~\eqref{eq:z3dmmin},~\eqref{eq:dmttp} and~\eqref{eq:minors},  
we observe that
\begin{align*}
    Q(Z_{[3]}(D_2)\res\dott a;\{x_{\dott e}=u_e,x_{\barr e}=v_e,x_{\hatt e}=w_e\}_{e\in {E'_2}},t) &= Q(D_2\con a;(\{u_e,v_e,w_e\}_{e\in{E'_2}},t),\\
    Q(Z_{[3]}(D_2)\res\barr a;\{x_{\dott e}=u_e,x_{\barr e}=v_e,x_{\hatt e}=w_e\}_{e\in {E'_2}},t) &= Q(D_2\ba a;(\{u_e,v_e,w_e\}_{e\in {E'_2}},t),\\
    Q(Z_{[3]}(D_2)\res\hatt a;\{x_{\dott e}=u_e,x_{\barr e}=v_e,x_{\hatt e}=w_e\}_{e\in {E'_2}},t) &= Q(D_2+a\con a;(\{u_e,v_e,w_e\}_{e\in {E'_2}},t),
    \end{align*}
so $\dott y$, $\barr y$ and $\hatt y$ are as described in the theorem statement.
\end{proof}

\begin{theorem}\label{thm:rgpdm}
 Let $D_1$ be a vf-safe delta-matroid on $E_1$ in which every element is non-singular. Let $D_2$ be a vf-safe delta-matroid with non-singular element $a$ and let $E':=E(D_2)-\{a\}$. 
Suppose that $D_1\otimes_{\Phi} D_2$ is a tensor product along $a$ with $\Phi=\{\phi_e:e \in E_1\}$. 
Then
\[Q(D_1\otimes_\Phi D_2;
(\mathbf u, \mathbf v, \mathbf w),t)
=
Q(D_1; \{u_e=\phi_e^{-1}(p), v_e=\phi_e^{-1}(q), w_e=\phi_e^{-1}(r)\}_{e \in E_1}, t),\]
where $(p,q,r)$
is the unique solution to the following system of linear equations in the ring $\ints[t,\{u_e,v_e,w_e\}_{e\in E'}]$.
    \begin{align*}
        Q(D_2\con a;\{u_e,v_e,w_e\}_{e\in E'},t)=&tp+q+r,\\
        Q(D_2\ba a;\{u_e,v_e,w_e\}_{e\in E'},t)=&p+tq+r,\\
        Q(D_2+a\con a;\{u_e,v_e,w_e\}_{e\in E'},t)=&p+q+tr.
    \end{align*}
\end{theorem}
\begin{proof}Let $D:=D_1\otimes_{\Phi} D_2$ and 
$E:=E(D)$. 
By Equation~\eqref{eq:dmttp},  
\[ Q(D;(\boldsymbol{u},\boldsymbol{v},\boldsymbol{w}),t) = Q(Z_{[3]}(D); \{x_{\dott e} = u_e,
x_{\barr e} = v_e,
x_{\hatt e} = w_e\}_{e\in E},t).\]
By Proposition~\ref{cor:vfdmtp}, $Z_{[3]}(D)=  Z_{[3]}(D_1)\otimes_{\Phi^{[3]}} Z_{[3]}(D_2)$.
Recall that $\Phi^{[3]}$ is a family of decoration-preserving maps between the isomorphic copies of $Z_{[3]}(D_2)$ and $Z_{[3]}(D_2)$ itself. So, by Theorem~\ref{thm:main}, we have
\begin{multline*}
    Q(Z_{[3]}(D) ; \{x_{\dott e} = u_e,
x_{\barr e} = v_e,
x_{\hatt e} = w_e\}_{e\in E},t)\\
= Q(Z_{[3]}(D_1);
\{ x_{\dott e} = \phi_e^{-1}(p),
x_{\barr e} = \phi_e^{-1}(q),
x_{\dott e} = \phi_e^{-1}(r)\}_{e\in E_1},t)
\end{multline*} 
where $(p,q,r)$ is the unique solution to the following system of linear equations in the ring $\mathbb{Z}[t, \{u_e,v_e,w_e\}_{e\in E'}]$.  
   \begin{align}
        Q(Z_{[3]}(D_2)\res\dott a;\{x_{\dott e}=u_e,x_{\barr e}=v_e,x_{\hatt e}=w_e\}_{e\in E'},t)=&tp+q+r,\nonumber \\
        Q(Z_{[3]}(D_2)\res\barr a;\{x_{\dott e}=u_e,x_{\barr e}=v_e,x_{\hatt e}=w_e\}_{e\in E'},t)=&p+tq+r,\label{eq:minorstens}\\
        Q(Z_{[3]}(D_2)\res\hatt a;\{x_{\dott e}=u_e,x_{\barr e}=v_e,x_{\hatt e}=w_e\}_{e\in E'},t)=&p+q+tr.\nonumber
\end{align}
By applying Equation~\eqref{eq:dmttp} once more, 
we see that
\begin{multline*} Q(Z_{[3]}(D_1);
\{ x_{\dott e} = \phi_e^{-1}(p),
x_{\barr e} = \phi_e^{-1}(q),
x_{\hatt e} = \phi_e^{-1}(r)\}_{e\in E_1},t)\\
= Q(D_1; \{ u_e = \phi_e^{-1}(p),
v_e = \phi_e^{-1}(q),
w_e = \phi_e^{-1}(r)\}_{e\in E_1},t).
\end{multline*}
Finally by Equations~\eqref{eq:z3dmmin},~\eqref{eq:dmttp} and~\eqref{eq:minorstens},  
we observe that
\begin{align*}
    Q(Z_{[3]}(D_2)\res\dott a;\{x_{\dott e}=u_e,x_{\barr e}=v_e,x_{\hatt e}=w_e\}_{e\in E'},t) &= Q(D_2\con a;(\{u_e,v_e,w_e\}_{e\in E'},t),\\
    Q(Z_{[3]}(D_2)\res\barr a;\{x_{\dott e}=u_e,x_{\barr e}=v_e,x_{\hatt e}=w_e\}_{e\in E'},t) &= Q(D_2\ba a;(\{u_e,v_e,w_e\}_{e\in E'},t),\\
    Q(Z_{[3]}(D_2)\res\hatt a;\{x_{\dott e}=u_e,x_{\barr e}=v_e,x_{\hatt e}=w_e\}_{e\in E'},t) &= Q(D_2+a\con a;(\{u_e,v_e,w_e\}_{e\in E'},t),\\
    \end{align*}
so $p$, $q$ and $r$ are as described in the theorem statement.
\end{proof}

\medskip
A similar analysis gives a tensor product formula for the transition polynomial of Brijder and Hoogeboom from~\cite{MR3191496}. 
Let $D:=(E,\mathcal{F})$ be a delta-matroid, and $w,x,t$  be formal variables. Then the \emph{transition polynomial} $Q(D;w,x,t)$ for delta-matroids is
\[Q(D;w,x,t):= \sum_{A \subseteq E} w^{|E-A|}x^{|A|} t^{d(A)}.\]
(In~\cite{MR3191496} this polynomial was denoted by  $Q_{w,x,0}(D;t)$.)

It can be shown that for every delta-matroid $D$
and transversal $T$ of $Z_{[2]}(D)$, we have
$d_D(T\cap \dott E)=n_{Z_{[2]}(D)}(T)$.
(See Lemma~18 of~\cite{MR3191496} and the comments below it.)
 Consequently it follows that
\begin{equation}\label{eq:dmqtp}
    Q(D;w,x,t)=Q(Z_{[2]}(D);\{x_{\dott{e}}=x, x_{\barr{e}}=w\}_{e\in E},t).
\end{equation}

\begin{theorem}\label{thm:dmeven}
     Let $D_1$ be an even delta-matroid having no non-singular elements and let $D_2$ be an even delta-matroid with non-singular element $e$. Then
     \[Q(D_1\otimes_e D_2;w,x,t)=Q(D_1;p,q,t),\]
     where $(p,q)$ is the unique solution to the following system of linear equations in the ring $\ints[t,w,x]$.
     \begin{align*}
         Q(D_2\con e;w,x,t)&=tp+q,\\
         Q(D_2\ba e;w,x,t)&=p+tq.
     \end{align*}
\end{theorem}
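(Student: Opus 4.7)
The plan is to mirror the proof of Theorem~\ref{thm:rgpdm} by translating between even delta-matroids and tight 2-matroids, then invoking Theorem~\ref{thm:main} with $q=2$. All of the necessary ingredients are already in place: the bridge between delta-matroids and 2-matroids provided by $Z_{[2]}(\cdot)$, the compatibility of the tensor product with this bridge via Proposition~\ref{cor:dmtp}, the identification of elementary minors with deletion and contraction, and Equation~\eqref{eq: dmqtp} relating the two transition polynomials.

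First I would use Equation~\eqref{eq: dmqtp} to rewrite $Q(D_1\otimes_e D_2;w,x,t)$ as the weighted transition polynomial $Q(Z_{[2]}(D_1\otimes_e D_2);\boldsymbol{x},t)$ under the specialisation $x_{\dott{b^{(a)}}}=x$ and $x_{\barr{b^{(a)}}}=w$ for every element $b^{(a)}$ of $D_1\otimes_e D_2$. Proposition~\ref{cor:dmtp} then identifies $Z_{[2]}(D_1\otimes_e D_2)$ with $Z_{[2]}(D_1)\otimes_e Z_{[2]}(D_2)$ via the isomorphism that matches $\dott{b^{(a)}}$ with $\dott{b}^{(a)}$ and $\barr{b^{(a)}}$ with $\barr{b}^{(a)}$, so we may replace the former with the latter in the transition polynomial.

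Next I would apply Theorem~\ref{thm:main} to $Z_{[2]}(D_1)\otimes_e Z_{[2]}(D_2)$, which gives $\phi(Q(Z_{[2]}(D_1);\boldsymbol{x},t))$, where $\phi$ is induced by $x_{\dott a}\mapsto \phi^{(a)}(y_1)$ and $x_{\barr a}\mapsto \phi^{(a)}(y_2)$, and $y_1,y_2$ are the unique solutions to
\begin{align*}
Q(Z_{[2]}(D_2)|\dott e;\boldsymbol{x},t) &= ty_1+y_2, \\
Q(Z_{[2]}(D_2)|\barr e;\boldsymbol{x},t) &= y_1+ty_2.
\end{align*}
Using the identifications $Z_{[2]}(D_2\con e)=Z_{[2]}(D_2)|\dott e$ and $Z_{[2]}(D_2\ba e)=Z_{[2]}(D_2)|\barr e$ recalled earlier in the section, together with Equation~\eqref{eq: dmqtp}, these equations become precisely the defining equations for $p$ and $q$ in the theorem statement, so $y_1=p$ and $y_2=q$ after specialising the variables.

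Finally, I would observe that under the specialisation $x_{\dott b}=x$ and $x_{\barr b}=w$, which does not depend on either the element $b$ or the copy index $a$, the renaming map $\phi^{(a)}$ acts as the identity on the specialised polynomials $p$ and $q$. Consequently the substitution $\phi$ reduces to $x_{\dott a}\mapsto p$ and $x_{\barr a}\mapsto q$, and a final application of Equation~\eqref{eq: dmqtp} identifies the resulting expression as $Q(D_1;p,q,t)$. The main (and essentially only) obstacle is the bookkeeping at this last step: one must check that the per-copy renaming maps $\phi^{(a)}$ introduced by Theorem~\ref{thm:main} collapse harmlessly to the identity once all copies of $D_2$ share the same variables $w$ and $x$.
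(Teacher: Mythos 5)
Your proposal is correct and follows exactly the route the paper intends: the paper omits the proof of this theorem, stating only that it is obtained from the proof of Theorem~\ref{thm:rgpdm} by substituting Equation~\eqref{eq: dmqtp} for Equation~\eqref{eq:dmttp} and Proposition~\ref{cor:dmtp} for Proposition~\ref{cor:vfdmtp}, with the renaming maps collapsing because the variables of $Q(D;w,x,t)$ are not indexed by elements. You have filled in precisely those steps, including the correct observation about why the maps $\phi^{(a)}$ become trivial under the uniform specialisation.
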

A proof of this result can be obtained following the method used for the proof of Theorem~\ref{thm:rgpdm}.   
 As such we omit its proof. The key points are that Equation~\eqref{eq:dmqtp} and Proposition~\ref{cor:dmtp} are used in place of 
Equation~\eqref{eq:dmttp} and Proposition~\ref{cor:vfdmtp}. 

\medskip

Matroids are precisely delta-matroids in which all feasible sets have the same cardinality. In this situation the feasible sets are exactly the bases of the matroid. 
For a matroid $M$ on $E$, we use $r(M)$ to denote its rank and $n(M)$ to denote its nullity, that is, $|E|-r(M)$.
We use $r(M)$ to denote the rank of a matroid $M$. 
(It will always be clear whether $r$ or $n$ is referring to the rank or nullity of a matroid or of a multimatroid, so the double use of $r$ and $n$ should cause no confusion.)

For an arbitrary  delta-matroid $D:=(E,\mathcal{F})$, let $\mathcal{F}_{\max}$ and $\mathcal{F}_{\min}$ denote the set of feasible sets in $\mathcal{F}$ with maximum and minimum cardinality, respectively. As in~\cite{zbMATH04185622},  $D_{\max}:=(E,\mathcal{F}_{\max})$ and $D_{\min}:=(E,\mathcal{F}_{\min})$ are both matroids. 
By considering the ranks of these matroids we can 
 define a function $\sigma$ on delta-matroids by setting 
\begin{equation}\label{eq:defsigma}
 \sigma(D):=\tfrac{1}{2}(r(D_{\min})+r(D_{\max})).   
\end{equation} 
   For a subset $A$ of $E$, we set $\sigma_D(A):=\sigma(D\backslash A^c)$, where $A^c:=E-A$. We omit the subscript when $D$ is clear from context. Using Proposition~5.39 of~\cite{CMNR1} we can deduce that 
      \begin{equation} \label{eq:siguseful} \sigma_D(A)=\tfrac{1}{2}(|A|-d_D(A)+r(D_{\min})).\end{equation}
(Note that the parameter $\rho_D(A)$ appearing in~\cite{CMNR1} equals $|E|-d_D(A)$ in our notation.)

The \emph{Tutte polynomial}, $R(D;x,y)$, of a delta-matroid $D:=(E,\mathcal{F})$, introduced in~\cite{CMNR1}, is 
\[R(D;x,y):=\sum_{A\subseteq E} (x-1)^{\sigma(E)-\sigma(A)}(y-1)^{|A|-\sigma(A)}.\]

By using Equation~\eqref{eq:defsigma} to expand the exponents, and the definition of $d_D(E)$ for the prefactor, it follows that 
\begin{equation*}\label{eq:dmrgp}
   R(D;x+1,y+1)=\sqrt{x}^{\, r(D_{\max})-|E|}\sqrt{y}^{\,-r(D_{\min})}Q(D;\sqrt{x},\sqrt{y},\sqrt{xy}).
\end{equation*}

\begin{theorem}\label{thm:dmrg}
    Let $D_1$ be an even delta-matroid on $E_1$ having no non-singular elements and let $D_2$ be an even delta-matroid on $E_2$ with non-singular element $e$. 
    Let $D$ denote their tensor product $D_1\otimes_e D_2$, and let $E:=E(D)$.    
Then
    \begin{align*}   R(D;x+1,y+1)=q^{|E_1|}\big(\tfrac{p}{q\sqrt{xy}}\big)^{\sigma(D_1)}\sqrt{x}^{2\sigma(D)-|E|}\sqrt{xy}^{\,r({(D_1)}_{\min})-r(D_{\min})}\\ \cdot R\big(D_1;\tfrac{q\sqrt{xy}}{p}+1,\tfrac{p\sqrt{xy}}{q}+1\big),
    \end{align*}
    where $p$ and $q$ are the unique solutions to the following set of equations in the ring $\ints[x,y]$.
    \begin{align*} 
    R(D_2\con e;x+1,y+1)& = \sqrt{x}^{\,r((D_2\con e)_{\max})-|E_2|+1}\sqrt{y}^{\,-r((D_2\con e)_{\min})}(p\sqrt{xy}+q),\\
    R(D_2\ba e;x+1,y+1)& =  \sqrt{x}^{\,r((D_2\ba e)_{\max})-|E_2|+1}\sqrt{y}^{\,-r((D_2\ba e)_{\min})}(p + q\sqrt{xy} ) .
    \end{align*}
\end{theorem}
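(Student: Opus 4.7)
The plan is to transfer the tensor product identity of Theorem~\ref{thm:dmeven} for the transition polynomial $Q$ across the relation
\[R(D;x+1,y+1)=\sqrt{x}^{r(D_{\max})-|E|}\sqrt{y}^{-r(D_{\min})}Q(D;\sqrt{x},\sqrt{y},\sqrt{xy})\]
stated just before the theorem. Applying this relation to $\widetilde{D}=D_1\otimes_e D_2$ and then invoking Theorem~\ref{thm:dmeven} at $w=\sqrt{x}$, $x=\sqrt{y}$, $t=\sqrt{xy}$ reduces the evaluation of $R(\widetilde{D};x+1,y+1)$ to that of $Q(D_1;p,q,\sqrt{xy})$, where $p,q$ are as in Theorem~\ref{thm:dmeven}. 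Applying the same $R$--$Q$ relation to $D_2\con e$ and $D_2\ba e$ (whose ground sets have size $|E_2|-1$, which explains the ``$+1$'' in the exponent of $\sqrt{x}$) converts the system $Q(D_2\con e)=tp+q$, $Q(D_2\ba e)=p+tq$ into the system stated in the theorem, so the two descriptions of $p$ and $q$ agree.

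To bring $Q(D_1;p,q,\sqrt{xy})$ back to a Tutte polynomial of $D_1$, I would use the joint homogeneity
\[Q(D_1;\lambda w,\lambda x,t)=\lambda^{|E_1|}Q(D_1;w,x,t)\]
of the first two arguments of the transition polynomial to write $Q(D_1;p,q,\sqrt{xy})=\lambda^{|E_1|}Q(D_1;p/\lambda,q/\lambda,\sqrt{xy})$, and then choose $\lambda$ so that the substitution $\sqrt{x'}=p/\lambda$, $\sqrt{y'}=q/\lambda$ is compatible with the $R$--$Q$ relation for $D_1$, i.e.\ so that $\sqrt{x'y'}=\sqrt{xy}$. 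This forces $\lambda^2=pq/\sqrt{xy}$ and determines the values of $x'$ and $y'$ that will appear as the arguments of $R(D_1;\cdot,\cdot)$. The reverse direction of the $R$--$Q$ relation applied to $D_1$ then replaces $Q(D_1;\sqrt{x'},\sqrt{y'},\sqrt{x'y'})$ by $R(D_1;x'+1,y'+1)$ multiplied by the prefactor $\sqrt{x'}^{|E_1|-r((D_1)_{\max})}\sqrt{y'}^{r((D_1)_{\min})}$.

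What remains is to consolidate all of the prefactors into the form claimed. This means combining the Tutte--transition prefactors for $\widetilde{D}$ and $D_1$, the rescaling factor $\lambda^{|E_1|}$, and the factors $\sqrt{x'}^{|E_1|-r((D_1)_{\max})}$ and $\sqrt{y'}^{r((D_1)_{\min})}$ just introduced, and then rewriting the exponents of $p$, $q$, $\sqrt{x}$, $\sqrt{y}$ and $\sqrt{xy}$ so that they assemble into $q^{|E_1|}(p/(q\sqrt{xy}))^{\sigma(D_1)}$, $\sqrt{x}^{2\sigma(\widetilde{D})-|E|}$ and $\sqrt{xy}^{\,r((D_1)_{\min})-r(\widetilde{D}_{\min})}$ via the identity $\sigma(D)=\tfrac12(r(D_{\min})+r(D_{\max}))$. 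The main obstacle will be this exponent bookkeeping — tracking half-integer powers of $x$, $y$, $p$ and $q$ cleanly, and rewriting them in terms of $\sigma$ — rather than any new conceptual input, as all the structure needed is supplied by Theorem~\ref{thm:dmeven} and the $R$--$Q$ relation.
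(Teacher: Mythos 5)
Your proposal is correct and follows the same overall reduction as the paper: both pass from $R$ to $Q$ via the displayed $R$--$Q$ relation, invoke Theorem~\ref{thm:dmeven} at $(\sqrt{x},\sqrt{y},\sqrt{xy})$, and translate the linear system for $p$ and $q$ back into Tutte polynomials (with the ``$+1$'' in the exponents accounted for by $|E_2|-1$ exactly as you say). The one place you genuinely diverge is the final conversion of $Q(D_1;\cdot,\cdot,\sqrt{xy})$ into $R(D_1;\cdot,\cdot)$: the paper expands the subset sum directly and regroups exponents using $\sigma_{D_1}(A)=\tfrac12\bigl(|A|-d_{D_1}(A)+r((D_1)_{\min})\bigr)$, whereas you exploit homogeneity of $Q$ in its first two arguments and then reuse the $R$--$Q$ relation in reverse with $\lambda^2=pq/\sqrt{xy}$. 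Your route is slightly cleaner conceptually, and carried through it does reproduce the paper's prefactor: the surviving power of $\lambda$ is $r((D_1)_{\max})-r((D_1)_{\min})$, and since $\tfrac12\bigl(r((D_1)_{\max})+r((D_1)_{\min})\bigr)=\sigma(D_1)$ the exponents of $p$, $q$ and $\sqrt{xy}$ assemble into $q^{|E_1|}\bigl(\tfrac{p}{q\sqrt{xy}}\bigr)^{\sigma(D_1)}\sqrt{xy}^{\,r((D_1)_{\min})}$ as required. One caveat: you take Theorem~\ref{thm:dmeven} literally and write $Q(D_1;p,q,\sqrt{xy})$, but the paper's proof (and the stated formula of Theorem~\ref{thm:dmrg}, whose arguments are $\tfrac{q\sqrt{xy}}{p}+1$ and $\tfrac{p\sqrt{xy}}{q}+1$) uses $Q(D_1;q,p,\sqrt{xy})$, i.e.\ $q$ in the deletion slot and $p$ in the contraction slot; this ordering discrepancy originates in the paper's own statement of Theorem~\ref{thm:dmeven} rather than in your argument, but if you follow your plan verbatim you will land on the identity with $p$ and $q$ interchanged, so the order must be fixed when that theorem is invoked.
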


\begin{proof}
    Using Equation~\eqref{eq:dmrgp} to express $R(D;x+1,y+1)$ in terms of $Q(D;\sqrt{x},\sqrt{y},\sqrt{xy})$, then applying Theorem~\ref{thm:dmeven} gives
    \begin{align*}
        R(D;x+1,y+1)=\sqrt{x}^{\, r(D_{\max})-|E|}\sqrt{y}^{\,-r(D_{\min})}Q(D_1;q,p,\sqrt{xy}),
    \end{align*}
    where $p$ and $q$ are the unique solutions to the system of linear equations
    \begin{align*}
        Q(D_2\con e;\sqrt{x},\sqrt{y},\sqrt{xy})&=p\sqrt{xy}+q,\\
        Q(D_2\ba e;\sqrt{x},\sqrt{y},\sqrt{xy})&=p+q\sqrt{xy}.
    \end{align*}
    We apply Equation~\eqref{eq:dmrgp} to this system of linear equations to obtain those in the theorem. Using Equation~\eqref{eq:siguseful}, we can rewrite $Q(D_1;q,p,\sqrt{xy})$ as follows
    \begin{align*}
        Q(D_1;q,p,\sqrt{xy})&=\sum_{A\subseteq E_1}q^{|E_1|-|A|}p^{|A|}\sqrt{xy}^{\, d_{D_1}(A)}\\
        &=q^{|E_1|}\sum_{A\subseteq E_1}\big(\tfrac{p}{q}\big)^{|A|}\sqrt{xy}^{\, |A|+r({(D_1)}_{\min})-2\sigma_{D_1}(A)}\\
        &=q^{|E_1|}\sqrt{xy}^{\, r({(D_1)}_{\min})}\sum_{A\subseteq E_1}\big(\tfrac{p\sqrt{xy}}{q}\big)^{|A|-\sigma_{D_1}(A)}\big(\tfrac{q\sqrt{xy}}{p}\big)^{\, -\sigma_{D_1}(A)}\\
        &=q^{|E_1|}\big(\tfrac{p}{q\sqrt{xy}}\big)^{\sigma(D_1)}\sqrt{xy}^{\, r({(D_1)}_{\min})}R(D_1;\tfrac{q\sqrt{xy}}{p}+1,\tfrac{p\sqrt{xy}}{q}+1).
    \end{align*}
    Hence, the result follows.
\end{proof}

Brylawski's original result for the tensor product of the Tutte polynomial of matroids may be obtained as a corollary of Theorem~\ref{thm:dmrg}. 
The \emph{Tutte polynomial}, $T(M;x,y)$, of a matroid $M$ on $E$ is defined as 
\[T(M;x,y):=\sum_{A\subseteq E}(x-1)^{r(E)-r(M\ba A^c)}(y-1)^{|A|-r(M\ba A^c)}.\]
If $M$ is a matroid on $E$
and $A\subseteq E$, then $\sigma(A)=r(M\ba A^c)$ and so $T(M;x,y)=R(M;x,y)$.
Hence,  we obtain the following corollary of Theorem~\ref{thm:dmrg}.
\begin{corollary}\label{cor:brymat}
Let $M_1$ be a loopless, coloopless matroid on $E_1$  and let $M_2$ be a matroid on $E_2$ with element $e$ which is neither a loop nor a coloop.
Then
    \[T(M_1\otimes_e M_2;x,y)=\alpha^{n(M_1)}\beta^{r(M_1)}T(M_1;T(M_2\ba e;x,y)/\beta,T(M_2\con e;x,y)/\alpha),\]
    where $(\alpha,\beta)$ is the unique solution to the following system of linear equations in the ring $\ints[x,y]$. 
    \begin{align*}
        T(M_2\ba e;x,y)&=(x-1)\alpha+\beta,\\
        T(M_2\con e;x,y)&=\alpha+(y-1)\beta.
    \end{align*}
 \end{corollary}

\begin{proof}
    This theorem follows from applying Theorem~\ref{thm:dmrg} to the matroid $M_1\otimes_e M_2$. Write $M:=M_1\otimes_e M_2$ and let $E:=E(M)$.
    We use the observation that for every matroid $N$,  $r({N}_{\max})=r({N}_{\min})=r(N)$ and $\sigma(A)=r(N\ba A^c)$, and that when $e$ is non-singular, $r(M_2\ba e)=r(M_2)$ and $r(M_2\con e)=r(M_2)-1$, to obtain 
    \[T(M;x+1,y+1)=q^{|E_1|}(\tfrac{p}{q})^{r(M_1)}\sqrt{x}^{\, -|E|}\sqrt{\tfrac{x}{y}}^{\,r(M)}T(M_1;\tfrac{q\sqrt{xy}}{p}+1,\tfrac{p\sqrt{xy}}{q}+1),\]
    where $(p,q)$ is the unique solution to the following linear equations in the ring $\ints[x,y]$. 
    \begin{align*}
        T(M_2\con e;x+1,y+1)& = \sqrt{x}^{1-|E_2|}\sqrt{\tfrac{x}{y}}^{\, r(M_2)-1}(p\sqrt{xy}+q),\\
        T(M_2\ba e;x+1,y+1)& =  \sqrt{x}^{1-|E_2|}\sqrt{\tfrac{x}{y}}^{\, r(M_2)}(p + q\sqrt{xy} ).
    \end{align*}
    It is straightforward to check that $r(M)=r(M_1)+|E_1|(r(M_2)-1)$. So by letting $\alpha=q \sqrt{x}^{\, 1-|E_2|}\sqrt{\tfrac{x}{y}}^{\, r(M_2)-1}$ and $\beta=p \sqrt{x}^{\, 1-|E_2|}\sqrt{\tfrac{x}{y}}^{\, r(M_2)}$ we obtain the result.  
\end{proof}

\section{Ribbon graphs}\label{sec:rg}

\begin{figure}
     \centering
     \begin{subfigure}[c]{0.4\textwidth}
         \centering
          \labellist
\small\hair 2pt
\pinlabel {$a$} at    75 72
\pinlabel {$b$} at     75 26
\pinlabel {$c$} at     11 72 
\endlabellist
 \includegraphics[scale=1]{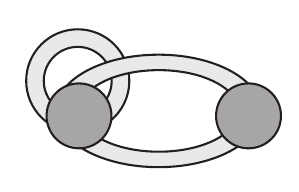}
         \caption{A ribbon graph $\bG$.}
         \label{fig:rg1}
     \end{subfigure}
 \centering
     \begin{subfigure}[c]{0.4\textwidth}
         \centering
          \labellist
\small\hair 2pt
\pinlabel {$a$} at  34 57   
\pinlabel {$a$} at   108 55
\pinlabel {$b$} at   50 19
\pinlabel {$b$} at   111 17
\pinlabel {$c$} at    18 35
\pinlabel {$c$} at   56 46
\endlabellist
 \includegraphics[scale=1]{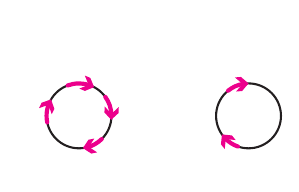}
         \caption{$\bG$ as an arrow presentation.}
         \label{fig:rg4}
     \end{subfigure}  
     \hfill
     \begin{subfigure}[c]{0.4\textwidth}
         \centering
         \includegraphics[scale=1]{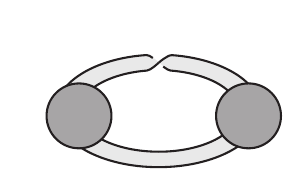}
    \caption{$\bG^{\tau(a)}\ba c$.}
         \label{fig:rg2}
     \end{subfigure}
       \hfill
     \begin{subfigure}[c]{0.4\textwidth}
         \centering
         \includegraphics[scale=1]{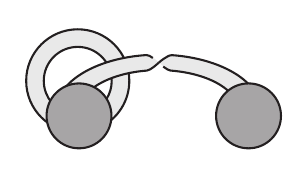}
    \caption{$\bG^{\tau(a)}\ba b$.}
         \label{fig:rg3}
     \end{subfigure}
        \caption{Ribbon graphs.}
        \label{fig:rg}
\end{figure}

 The \emph{topological transition polynomial},  introduced in \cite{MR2869185}, is a multivariate polynomial of ribbon graphs (or equivalently, of graphs embedded in surfaces). It contains both the 2-variable version of  Bollob\'as and Riordan's ribbon graph polynomial~\cite{MR1851080,MR1906909} and the Penrose polynomial~\cite{MR1428870,MR2994409} as specialisations, and is intimately related to  Jaeger's transition polynomial~\cite{MR1096990} and the 
 generalised transition polynomial of 
 \cite{MR1980048}. Jaeger's transition polynomial was formulated in terms of circuit partitions of $4$-regular graphs. For the connections between $4$-regular graphs and ribbon graphs, see~\cite{zbMATH06473572} and the many references therein.

A {\em ribbon graph} $\bG=\left(V,E\right)$ is a surface with boundary, represented as the union of two sets of discs --- a set $V$ of {\em vertices} and a set $E$ of {\em edges} --- such that: (1) the vertices and edges intersect at disjoint line segments; (2) each such line segment lies on the boundary of precisely one vertex and precisely one edge; and (3) every edge contains exactly two such line segments. 
Two ribbon graphs are \emph{isomorphic} if there is a homeomorphism from one to the other that sends vertices to vertices and edges to edges. The homeomorphism should be orientation preserving when the  ribbon graphs are orientable.
As every ribbon graph $\bG$ can be regarded as a surface with boundary, it has some number of boundary components, which we shall denote by $b(\bG)$. The boundary components will be particularly important in what follows. 
 We let $k(\bG)$ denote the number of connected components of $\bG$, and $e(\bG)$ its number of edges. At times we use $E(\bG)$ to denote the edge set of $\bG$. 
Some examples of ribbon graphs can be found in Figure~\ref{fig:rg}. The ribbon graphs in Figures~\ref{fig:rg1} and~\ref{fig:rg2} each have one boundary component, while that in Figure~\ref{fig:rg3} has two.  Additional background on ribbon graphs can be found in~\cite{MR3086663,zbMATH07680505}.

Let $\bG:=(V,E)$ be a ribbon graph and $e\in E$. Then $\bG\ba e$ denotes the ribbon graph obtained from $\bG$ by \emph{deleting} the edge $e$. For $A\subseteq E$, $\bG\ba A$ is the result of deleting each edge in $A$ (in any order).
Edges can also be contracted.  If $u$ and $v$ are vertices incident to $e$ (it is possible that $u=v$ here), consider the boundary component(s) of $e\cup\{u,v\}$ as curves on $\bG$. For each resulting curve, attach a disc (which will form a vertex of $\bG\con e$) by identifying its boundary component with the curve. Delete $e$, $u$ and $v$ from the resulting complex, to get the ribbon graph $\bG\con e$.

The \emph{partial Petrial} with respect to an edge of a ribbon graph $e$,  introduced in \cite{MR2869185} and denoted here by $\bG^{\tau(e)}$,   is, informally, the result  of detaching one end of the edge $e$ from a vertex, giving the edge a half-twist, and reattaching it in the same place. Formally, it is obtained by  detaching an end of $e$ from its incident vertex $v$ creating arcs $[a,b]$ on $v$, and $[a',b']$ on $e$  (so that $\bG$ is recovered by identifying $[a,b]$ with $[a',b']$), then reattaching the end  by identifying the arcs antipodally (so that $[a,b]$ is identified with $[b',a']$). 
 For $A\subseteq E(G)$,  the partial Petrial $\bG^{\tau(A)}$ is the result of forming the partial Petrial with respect to every element in $A$ (in any order).  
 There is a natural correspondence between the edges of $\bG$ and  $\bG^{\tau(A)}$, and so we can and will assume that they have the same edge set. Again, examples can be found in Figure~\ref{fig:rg}.

\medskip

Let $\bG:=(V,E)$ be a ribbon graph, $\boldsymbol{u}=\{u_e\}_{e\in E}$, $\boldsymbol{v}=\{v_e\}_{e\in E}$, $\boldsymbol{w}=\{w_e\}_{e\in E}$ be indexed families  of indeterminates, and $t$  be another indeterminate. Then the
 \emph{topological transition polynomial}, $Q(\bG; (\boldsymbol{u}, \boldsymbol{v}, \boldsymbol{w}) , t)$, can be defined as 
 \[Q(\bG; (\boldsymbol{u}, \boldsymbol{v}, \boldsymbol{w}) , t) :=\sum_{(\X,\Y,\Z) \in \mathcal{P}_3(E)}    \Big( \prod_{e\in \X}u_e\Big) \Big(  \prod_{e\in \Y}  v_e\Big) \Big( \prod_{e\in \Z}   w_e\Big)  t^{b( \bG^{\tau(\Z)}\ba \Y)},\]
where $\mathcal{P}_3(E)$ denotes the set of ordered partitions of $E$ into three blocks (blocks may be empty), and where $b(\bG^{\tau(\Z)}\ba \Y)$ denotes the number of boundary components of $\bG^{\tau(\Z)}\ba \Y$.
Additional background on the topological transition polynomial can be found in, for example,~\cite{zbMATH07680507}.

\begin{example}
If $\bG$ is a plane 2-cycle (i.e., the ribbon graph that is a cycle of length two and has genus 0)  with edges $a$ and $b$ then 
\begin{multline*}
Q(\bG; (\boldsymbol{u}, \boldsymbol{v}, \boldsymbol{w}) , t)
\\=
u_a u_b t^2+
u_a v_b t+
u_a w_b t+
v_a u_b t+
v_a v_b t^2+
v_a w_b t+
w_a u_b t+
w_a v_b t+
w_a w_b t^2.
\end{multline*}\expleend
\end{example}

\medskip

Following~\cite[Section~5]{mmact}, a tight 3-matroid $Z(\bG)$ with carrier $(U, \Omega)$  can be obtained from a  ribbon graph $\bG:=(V,E)$. For this let 
 $U:=\{\dott e, \barr{e}, \hatt e: e\in E\}$, the skew classes be
 $\Omega=\{\{\dott e, \barr{e}, \hatt e \}:e \in E\}$, 
and the set of bases of $Z(\bG)$ be 
\begin{multline*}   \{ 
\{\dott{\x}: \x\in \X\} \cup
\{\barr{\y}: \y\in \Y\}\cup
\{\hatt{\z}: \z\in \Z\} 
: \\ (\X,\Y,\Z)\in\mathcal{P}_3(E)  \text{ and } b(\bG^{\tau(\Z)} \ba \Y)  = k(\bG)
\}.
\end{multline*}
 Following our notational conventions, we denote the skew class $\{\dott e, \barr{e}, \hatt{e}\}$ by $\sk{e}$.

\begin{example}
Consider the ribbon graphs $\bG$ and $\bH$ shown in Figure~\ref{fig:rgs1}. Then the 3-matroid of $\bG$ is given in Example~\ref{example1}, and the 3-matroid of $\bH$ is given in Example~\ref{example1b}.\expleend
\end{example}

We need to recognise which edges of $\bG$ give rise to  singular skew classes of $Z(\bG)$. For this we need some additional ribbon graph terminology.
An edge $e$ of a ribbon graph $\bG$ is a \emph{bridge} if $k(\bG \ba e)>k(\bG)$. It is a \emph{loop} if it is incident to exactly one vertex. A loop $e$ is \emph{orientable} if $e$ together with its incident vertex forms an annulus, and is \emph{nonorientable} if it forms a M\"obius band. A loop $e$ is said to be \emph{interlaced} with a cycle $C$ if when travelling around the boundary of the vertex incident to $e$ we see edges in the cyclic order $e \, c_1\, e \,c_2$ where $c_1$ and $c_2$ are edges of $C$.
A loop is \emph{trivial} if it is not interlaced with any cycle. 
Observe that a bridge necessarily intersects one boundary component of $\bG$, and a trivial orientable loop must meet two boundary components. 

It was shown in~\cite{mmact} that for a ribbon graph $\bG$ with an edge $e$, a skew class $\sk{e}=\{\dott{e},\barr{e},\hatt{e}\}$ of $Z(\bG)$ is singular if and only if the edge $e$ is either a  bridge or trivial loop in $\bG$.
It was also shown there that for each edge $e$ of $\bG$,
\begin{equation}\label{eq:rgzdc}
Z(\bG\con e) =  Z(\bG)\res {\dott{e}}, \quad
 Z(\bG\ba e) =  Z(\bG)\res {\barr{e}}, 
 \quad 
  Z(\bG^{\tau(e)} \con e ) =  Z(\bG)\res {\hatt{e}}.
\end{equation}

As also shown in~\cite{mmact},  transition polynomials for multimatroids and ribbon graphs agree up to a factor of $t^{k(\bG)}$. We have
\begin{equation}\label{eq:rztra}
 Q(\bG; (\boldsymbol{u}, \boldsymbol{v}, \boldsymbol{w}) , t)  = t^{k(\bG)}\cdot  Q(Z(\bG);\vect x, t)  ,
 \end{equation}
when $u_e=x_{\dott{e}}$, $v_e= x_{\barr{e}}$, and $w_e=x_{\hatt{e}}$ for each edge $e$ of $\bG$.

\bigskip

In order to efficiently describe two-sums of ribbon graphs we consider arrow presentations.
An {\em arrow presentation} $\bG$ consists of a set of circles (i.e., closed 1-manifolds) and a set of labels. For each label there are exactly two arrows lying along one or more of the circles. All arrows are disjoint. The set of labels is called the \emph{edge set} and its elements are \emph{edges}.

Arrow presentations describe  ribbon graphs as follows: if $\bG$ is a ribbon graph, for each edge $e$, arbitrarily orient the  boundary of that edge, place an $e$-labelled arrow on the arcs where  $e$ intersects a vertex, pointing in the direction given by the edge's boundary orientation. Taking the boundaries of the vertices together with the labelled arrows gives an arrow presentation. On the other hand, given an arrow presentation, obtain a ribbon graph by identifying each circle with the boundary of a vertex disk.    Then, for each label $e$, take a disc with an orientation of its boundary and identify an arc on its boundary with an $e$-labelled arrow such that the direction of each arrow agrees with the orientation. Thus, in an arrow presentation, each circle corresponds to a vertex of a ribbon graph, and each pair of $e$-labelled arrows corresponds to an edge. 

\medskip

Let $\bG$ and $\bH$ be arrow presentations that have an edge $a$ in common but no other edges in common.
Furthermore, suppose in each arrow presentation that one of the \mbox{$a$-labelled} arrows is distinguished.
Then the \emph{two-sum}  $\bG\oplus_2\bH$ is the arrow presentation constructed by splicing the arrow presentations as indicated in Figure~\ref{fig:ap}. For this, add arcs from:
the tail of the distinguished $a$-labelled arrow in $\bG$ to the head of the distinguished $a$-labelled arrow in $\bH$; 
the head of the distinguished $a$-labelled arrow in $\bG$ to the tail of the distinguished $a$-labelled arrow in $\bH$;
the tail of the non-distinguished $a$-labelled arrow in $\bG$ to the head of the non-distinguished $a$-labelled arrow in $\bH$; 
and
the head of the non-distinguished $a$-labelled arrow in $\bG$ to the tail of the non-distinguished $a$-labelled arrow in $\bH$.
Then delete all four $a$-labelled arrows as well as the arcs of the circles of the arrow presentation that they lie on (excluding the end points of these arcs).

\begin{figure}
     \centering
     \begin{subfigure}[c]{0.4\textwidth}
         \centering
          \labellist
            \small\hair 2pt
            \pinlabel {$a$}  at  27 78
             \pinlabel {$a^*$}  at  30 34
             \pinlabel {$a$}  at  37 68
             \pinlabel {$a^*$}  at 36 21
             \pinlabel {$\bG$}  at  8 4
             \pinlabel {$\bH$}  at  57 4
            \endlabellist
         \includegraphics[scale=1]{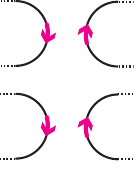}
         \caption{Two arrow presentations shown locally at $a$-labelled arrows.}
         \label{fig:ap1}
     \end{subfigure}
     \hfill
     \begin{subfigure}[c]{0.4\textwidth}
         \centering
          \labellist
            \small\hair 2pt
            \pinlabel {$\bG\oplus_{2} \bH$}  at  31 4
            \endlabellist
         \includegraphics[scale=1]{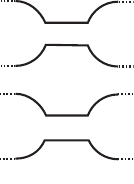}
         \caption{Their two-sum.}
         \label{fig:ap2}
     \end{subfigure}
        \caption{A two-sum of arrow presentations. Distinguished arrows are marked with a $\ast$.}
        \label{fig:ap}
\end{figure}

To define the two-sums of ribbon graphs we suppose that $\bG$ and $\bH$ are ribbon graphs, 
with an edge $a$ in common, but otherwise disjoint. 
 Further  suppose that in each of $\bG$ and $\bH$ that the edge $a$  has a local orientation (i.e., a notion of clockwise for that edge) and is directed (i.e., has a head and a tail). 
Convert $\bG$ and $\bH$ to arrow presentations reading off the directions of the $a$-labelled  arrows so that that they agree with the local edge orientations. Distinguish the  arrow that arose from the head of $a$ in each ribbon graph. Form the two-sum of the arrow presentations, then convert back to ribbon graphs. 

\begin{example}
Figures~\ref{fig:rgs1} and~\ref{fig:rgs2} shows two ribbon graphs $\bG$ and $\bH$ and their arrow presentations. Their two sum is shown as an arrow presentation in Figure~\ref{fig:rgs3} and as a ribbon graph in Figure~\ref{fig:rgs5}. Let $\bH'$ be the same as $\bH$ but with the edge named $a$ now named $c$. Then $\bG \oplus_2 \bH'$ (formed with respect to $c$) is shown as an arrow presentation in Figure~\ref{fig:rgs4} and as a ribbon graph in Figure~\ref{fig:rgs5}. \expleend
\end{example}

\begin{figure}
     \centering
     \begin{subfigure}[c]{0.4\textwidth}
         \centering
          \labellist
            \small\hair 2pt
            \pinlabel {$a$}  at  58 66
             \pinlabel {$b$}  at   30 66
             \pinlabel {$c$}  at   6 128
             \pinlabel {$a$}  at 104 66
             \pinlabel {$f$}  at   155 90
             \pinlabel {$g$}  at  155 35
             \pinlabel {$\bG$}  at  46 2 
             \pinlabel {$\bH$}  at   115 2
            \endlabellist
         \includegraphics[scale=1]{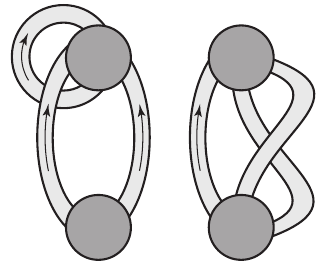}
         \caption{Two ribbon graphs. For two-sums and tensors, the edges are oriented anticlockwise (not shown on the figure).}
         \label{fig:rgs1}
     \end{subfigure}
   \hfill
     \begin{subfigure}[c]{0.4\textwidth}
         \centering
          \labellist
             \small\hair 2pt
            \pinlabel {$a$}  at   36 40
             \pinlabel {$b$}  at    2 26
             \pinlabel {$c$}  at  26 85  
             \pinlabel {$a$}  at  72 31
             \pinlabel {$f$}  at  89 47  
             \pinlabel {$g$}  at  112 30 
              \pinlabel {$a^*$}  at 46 106  
             \pinlabel {$b^*$}  at  2 98  
             \pinlabel {$c^*$}  at  10 125  
             \pinlabel {$a^*$}  at  73 92
             \pinlabel {$f$}  at    112 108
             \pinlabel {$g$}  at   95 85
             \pinlabel {$\bG$}  at  21 2 
             \pinlabel {$\bH$}  at   90 2
            \endlabellist
         \includegraphics[scale=1]{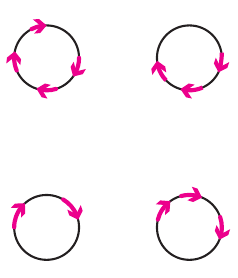}
         \caption{The ribbon graphs as arrow presentations.}
         \label{fig:rgs2}
     \end{subfigure}
   \hfill
     \begin{subfigure}[c]{0.4\textwidth}
         \centering
          \labellist
             \small\hair 2pt
             \pinlabel {$b$}  at    2 26
             \pinlabel {$c$}  at  26 85  
             \pinlabel {$f$}  at  89 47  
             \pinlabel {$g$}  at  112 30 
             \pinlabel {$b^*$}  at  2 98  
             \pinlabel {$c^*$}  at  10 125  
             \pinlabel {$f$}  at    112 108
             \pinlabel {$g$}  at   95 85
            \endlabellist
         \includegraphics[scale=1]{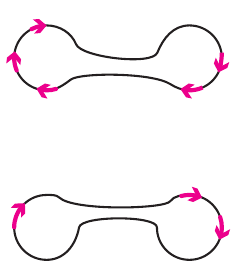}
         \caption{$\bG\oplus_{2}\bH$.}
         \label{fig:rgs3}
     \end{subfigure}
           \hfill
         \begin{subfigure}[c]{0.4\textwidth}
         \centering
          \labellist
            \small\hair 2pt
             \pinlabel {$b$}  at   32 66
             \pinlabel {$c$}  at   10 128
             \pinlabel {$f$}  at   80 113 
             \pinlabel {$g$}  at  81 33
            \endlabellist
         \includegraphics[scale=1]{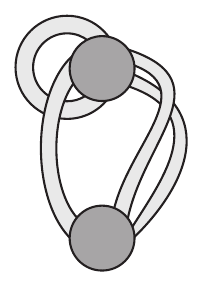}
         \caption{$\bG\oplus_{2}\bH$.}
         \label{fig:rgs5}
     \end{subfigure}
        \hfill
     \begin{subfigure}[c]{0.3\textwidth}
         \centering
          \labellist
             \small\hair 2pt
             \pinlabel {$a$}  at   36 40
             \pinlabel {$b$}  at    2 26
             \pinlabel {$f$}  at  89 47  
             \pinlabel {$g$}  at  112 30 
              \pinlabel {$a^*$}  at 46 106  
             \pinlabel {$b^*$}  at  2 98  
             \pinlabel {$f$}  at    112 108
             \pinlabel {$g$}  at   95 85
            \endlabellist
         \includegraphics[scale=1]{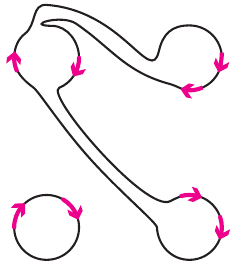}
         \caption{$\bG\oplus_{2}\bH'$.}
         \label{fig:rgs4}
     \end{subfigure}
        \hfill
         \begin{subfigure}[c]{0.3\textwidth}
         \centering
          \labellist
            \small\hair 2pt
             \pinlabel {$b$}  at   32 60
             \pinlabel {$a$}  at   58 60
             \pinlabel {$f$}  at   5 122 
             \pinlabel {$g$}  at  62 130
            \endlabellist
         \includegraphics[scale=1]{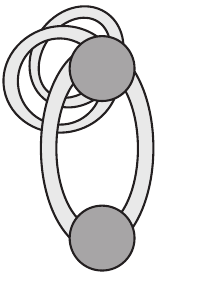}
         \caption{$\bG\oplus_{2}\bH'$.}
         \label{fig:rgs6}
     \end{subfigure}
           \hfill
          \begin{subfigure}[c]{0.3\textwidth}
         \centering
          \labellist
            \small\hair 2pt
             \pinlabel {$f_a$}  at 80 116 
             \pinlabel {$g_a$}  at   85 37 
                \pinlabel {$f_b$}  at  7 40 
             \pinlabel {$g_b$}  at 43 60
                 \pinlabel {$f_c$}  at 2 122  
             \pinlabel {$g_c$}  at  62 134
            \endlabellist
         \includegraphics[scale=1]{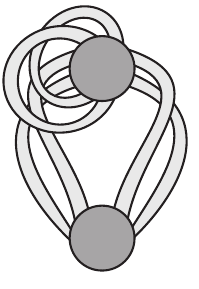}
         \caption{$\bG\otimes_{\Phi}\bH$.}
         \label{fig:rgs7}
     \end{subfigure}
         \caption{Forming two-sums and tensor products of ribbon graphs.}
        \label{fig:rgs}
\end{figure}

As shown in~\cite{serpar} the two-sum operation for ribbon graphs and for multimatroids are compatible operations.

\begin{theorem}\label{thm:rgtp}
Let $\bG$ and $\bH$ be ribbon graphs, with a common edge $a$ that is locally oriented and directed in both $\bG$ and $\bH$, and no other edges in common. Further, suppose that in both $\bG$ and $\bH$, $a$ is neither a bridge nor a trivial loop. Then
\[ Z(\bG\oplus_{2}\bH) = Z(\bG)\oplus_{2}Z(\bH)  .\]
\end{theorem}
We note that while the particular ribbon graph $\bG\oplus_{2}\bH$ constructed as a two-sum depends upon the choices of local orientation and direction on the common edge, the 3-matroid $Z(\bG\oplus_{2}\bH)$ does not.

We can use two-sums to define tensor products. 
Let $\bG$ be a ribbon graph with no bridges or trivial loops
in which every edge has a local orientation and is directed. 
Let $\mathcal H=\{\bH_e:e \in E(\bG)\}$ be a family of pairwise disjoint ribbon graphs indexed by the edges of $\bG$, so that for each edge $e$ of $\bG$,  
the ribbon graphs $\bG$ and $\bH_e$ have the edge $e$ in common but are otherwise disjoint. Moreover, suppose that in $\bH_e$, $e$ has a local orientation and is directed. 
Further, suppose that $\bH$ is a ribbon graph with a distinguished edge $a$ that is neither a bridge nor a trivial loop and has a local orientation and is directed,
and that for each edge $e$ of $\bG$, there is an isomorphism $\phi_e:\bG_e\rightarrow \bH$ with $\phi_e(e)=a$, preserving the local orientation and the direction of $e$. 
Let $\Phi=\{\phi_e:e\in E(\bG)\}$.
Then the \emph{tensor product} $\bG\otimes_{\Phi} \bH $ is the ribbon graph obtained by successively forming the two-sum of $\bG$ and each $\bH_e$.

\begin{example}
Take $\bG$ and $\bH$ as in Figure~\ref{fig:rgs1}. Now let $\bH_a$, $\bH_b$ and $\bH_c$ be isomorphic copies of $\bH$,
with edge sets $\{a,f_a,g_a\}$, $\{b,f_b,g_b\}$ and $\{c,f_c,g_c\}$ respectively. For each edge $e$ of $\bG$, let $\phi_e:\bH_e\rightarrow \bH$ be the isomorphism with $\phi_e(e)=a$, $\phi_e(f_e)=f$ and $\phi_e(g_e)=g$, and let $\Phi=\{\phi_e,\phi_f,\phi_g\}$. Then
$\bG \otimes_{\Phi} \bH$ is shown in Figure~\ref{fig:rgs7}.\expleend
\end{example}

Analogously with the situation in delta-matroids,
every isomorphism $\phi$ of a ribbon graph leads to an isomorphism $\phi^{[3]}$ of its $3$-matroid, with
$\phi^{[3]}(\dott e) = \dott {({\phi(e)})}$, $\phi^{[3]}(\barr e) = \barr {\phi(e)}$
and $\phi^{[3]}(\,\hatt e\,) = \hatt {\phi(e)}$. 
As a corollary of Theorem~\ref{thm:rgtp} we have the following.
\begin{corollary}\label{cor:rgtp}
Let $\bG$, $\bH$ and $\Phi$ be as above and let $\Phi^{[3]}:= \{\phi_e^{[3]}:e\in E(\bG)\}$.
Then
\[ Z(\bG\otimes_{\Phi}\bH) = Z(\bG)\otimes_{\Phi^{[3]}}Z(\bH)  .\]
 \end{corollary}

We can now obtain an expression for the transition polynomial of the tensor product of two ribbon graphs.
\begin{theorem}\label{thm:rgtpf}
Let $\bG$, $\bH$ and $\Phi$ be as above and let $E':=E(\bH)-\{a\}$. Then 
\begin{multline*}
Q(\bG \otimes_\Phi \bH;
(\mathbf u, \mathbf v, \mathbf w),t)
\\= t^{e(\bG)(k(\bH)-1)}
Q(\bG; \{u_e=\phi_e^{-1}(\dott y), v_e=\phi_e^{-1}(\barr y), w_e=\phi_e^{-1}(\hatt y)\}_{e \in E'}, t),\end{multline*}
where $(\dott{y}, \barr{y}, \hatt{y})$ is the 
 unique solution to the following system of linear equations in the ring
 $\ints[t,\{u_e,v_e,w_e\}_{e\in E'}]$.
    \begin{align*}
        Q(\bH \con a; (\boldsymbol{u}, \boldsymbol{v}, \boldsymbol{w}) , t) &=t^{k(\bH)} (t\dott{y}+\barr{y}+\hatt{y}), \\
          Q(\bH \ba a; (\boldsymbol{u}, \boldsymbol{v}, \boldsymbol{w}) , t) &=t^{k(\bH)} (\dott{y}+t\barr{y}+\hatt{y}), \\
          Q(\bH^{\tau(a)} \con a; (\boldsymbol{u}, \boldsymbol{v}, \boldsymbol{w}) , t) &=t^{k(\bH)} (\dott{y}+\barr{y}+t\hatt{y}).
    \end{align*}
\end{theorem}
\begin{proof}
The proof is similar to the proof of Theorem~\ref{thm:rgpdm}, with the minor additional complication of dealing with the prefactor, so we just outline how this affects the proof. Let $E:=E(\bG\otimes_{\Phi}\bH)$. We have $k(\bG\otimes_{\Phi} \bH) = k(\bG) + e(\bG)(k(\bH)-1)$. So in the first step we apply 
Equation~\eqref{eq:rztra} to get
\begin{multline*}
Q(\bG \otimes_\Phi \bH;
(\mathbf u, \mathbf v, \mathbf w),t)
\\=
t^{k(\bG) + e(\bG)(k(\bH)-1)}Q(Z(\bG \otimes_\Phi \bH); \{x_{\dott e} = u_e,
x_{\barr e} = v_e,
x_{\hatt e} = w_e\}_{e\in E},t).\end{multline*}
We then apply Corollary~\ref{cor:rgtp} to express this is in terms of $Q(Z(\bG)\otimes_{\Phi} Z(\bH))$, before applying Theorem~\ref{thm:main}, and then Equation~\eqref{eq:rztra} once again to return to ribbon graphs. In the last step we lose a factor of $t^{k(\bG)}$ from the prefactor.

The system of linear equations arises from those of Theorem~\ref{thm:main}, by again using Equation~\eqref{eq:rztra} to express them in terms of ribbon graphs,  then applying Equations~\eqref{eq:rgzdc}. 
The rewriting also uses the fact that $k(\bH) = k(\bH \con e) = k(\bH \ba e)=k(\bH^{\tau(e)} \con e)$, which follows because $e$ is neither a bridge nor a trivial loop.
\end{proof}

Note that by taking $\bG$ and $\bH$ to be connected, $e$ to be a non-loop edge, and each variable in $\boldsymbol{u}$, $\boldsymbol{v}$, and $\boldsymbol{w}$ to be identical we recover exactly Theorem~6.1 of~\cite{ELLIS_MONAGHAN_2014}. Our proof here shows that the requirement for $e$ to be a non-loop edge in that theorem can be weakened.

As in~\cite[Section~6]{ELLIS_MONAGHAN_2014}, Theorem~\ref{thm:rgtpf} can be specialised to give a tensor product formula for the Tutte polynomial of a ribbon graph, which is also known as the ribbon graph polynomial or the 2-variable Bollob\'as--Riordan (an extension of the formula to the full 3-variable Bollob\'as--Riordan polynomial can be found in~\cite{maya2}).
Alternatively, this tensor product formula can be deduced from Theorem~\ref{thm:rgpdm} by considering the  delta-matroid $D(\bG)$ of $\bG$, following the construction in~\cite[Section~4]{CMNR1} and making use of~\cite[Theorems~6.4 and~6.6]{CMNR1} to relate the ribbon graph and delta-matroid Tutte polynomials.

\bigskip 

\noindent\large{{\bf Acknowledgements}} We thank Donggyu Kim for their helpful comments. We thank the referees for their considered comments. 

\bigskip

\noindent\large{{\bf Funding Information}} Iain Moffatt and Steven Noble were supported by the Engineering and Physical Sciences Research Council [grant numbers EP/W033038/1 and EP/W032945/1, respectively].

\bigskip 

\noindent\large{{\bf Declarations}} 

\bigskip

\noindent\large{{\bf Conflict of interest}} On behalf of all authors, the corresponding author states that there is no conflict of interest.

\bigskip 

\noindent\large{{\bf Data}} No underlying data is associated with this article.

\bigskip 

\noindent\large{{\bf Open Access}} For the purpose of open access, the authors have applied a Creative Commons
Attribution (CC BY) licence to any Author Accepted Manuscript version arising.

\bibliography{multimatroid-tens}

\end{document}